\newtheorem{thm}{Theorem}
\newtheorem{defn}{Definition}
\newtheorem{lemma}{Lemma}
\newtheorem{rk}{Remark}
\newtheorem{ex}{Example}
\numberwithin{equation}{section} \setcounter{tocdepth}{1}
\begin{document}
\title [Classification  in chains of three-dimensional real evolution algebras ]
{Classification  in chains of three-dimensional real evolution algebras }

\author{ B. A. Narkuziyev, U.A. Rozikov}

\address{B.A. Narkuziyev, V.I.Romanovskiy Institute of Mathematics of Uzbek Academy of Sciences, Tashkent, Uzbekistan.}
 \email { bnarkuziev@yandex.ru}

\address{ U.Rozikov$^{a,b,c}$\begin{itemize}
 \item[$^a$] V.I.Romanovskiy Institute of Mathematics of Uzbek Academy of Sciences;
\item[$^b$] AKFA University, 1st Deadlock 10, Kukcha Darvoza, 100095, Tashkent, Uzbekistan;
\item[$^c$] Faculty of Mathematics, National University of Uzbekistan.
\end{itemize}}
\email{rozikovu@yandex.ru}

\begin{abstract} A chain of evolution algebras (CEA) is an uncountable family
(depending on time) of evolution algebras on the field of real numbers.
The matrix of structural constants of a CEA satisfies Kolmogorov-Chapman equation.
In this paper, we consider three CEAs of three-dimensional real
evolution algebras.
These CEAs depend on several (non-zero) functions
defined on the set of time.
For each chain we give full classification (up to isomorphism) of the algebras depending on the time-parameter.
We find concrete functions ensuring that the corresponding CEA contains all possible three-dimensional
evolution algebras.
\end{abstract}

\subjclass[2010] {17D92; 17D99; }

\keywords{Evolution algebra, chain of evolution algebra, Kolmogorov-Chapman equation, isomorphism of algebras.} \maketitle

\section{Introduction}

An evolution algebra (EA)  (see \cite{T}) is an abstract system, it gives an insight for the study of non-Mendelian genetics.
In the relation between EAs  and Markov processes, the Chapman-Kolmogorov equation gives
the fundamental relationship between  the probability transitions (kernels). There are many
recent papers devoted to the theory of evolution algebras (see for example \cite{5}, \cite{6}, \cite{MK}, \cite{MKQ}, \cite{N}, \cite{Rbp} and the references therein).

In \cite{1} a notion of a chain of evolution algebras (CEA) is introduced.
Later,  in  \cite{7} and \cite{8} (see also \cite{Rbp}) the notion of CEA was generalized  and a concept of flow  of arbitrary finite-dimensional algebras is introduced.
It is known that each EA is determined  by a quadratic matrix of structural constants.
A CEA is a (uncountable) family of EAs depending on the two-dimensional time $(s,t)$, $0\leq s\leq t$.

The matrices of structural constants (depending on $(s,t)$) of a CEA satisfy the Chapman-Kolmogorov equation.
 In other words, a CEA is a continuous-time dynamical  system which in a fixed time  is an EA.

In \cite{12} a wide class of finite-dimensional CEAs is constructed. In \cite{1}-\cite{3}, \cite{7}-\cite{MRX}, \cite{9}-\cite{11}, \cite{12}-\cite{14}
several new classes of CEAs and flows of algebras are given. These investigations are
used to develop the theory of Markov processes of cubic matrices \cite{LR}, \cite{MRX}.
The recent book \cite{Rbp} contains a systematic presentation of these algebraic and probabilistic
approaches to study population dynamics.

In this paper we consider three CEAs constructed in  \cite{3}.

Before formulation of our main problem, let us give basic notations.
Following \cite{1} we consider a family $\{E^{[s,t]}: s,t\in \mathbb{R}, 0\leq s\leq t\}$ of
$n$-dimensional evolution algebras over the field $ \mathbb{R}$, with the basis $e_{1},e_{2},\dots,e_{n}$
and the multiplication table $$e_{i}e_{i}=\sum_{j=1}^{n}a_{i,j}^{[s,t]}e_{j},
\,\,\,\,\,\,i=1,\dots,n; \,\,\,\,\,e_{i}e_{j}=0, \,\,\,i\neq j.$$
Here parameters $s,t$ are considered as time.

Denote by $\mathcal M^{[s,t]}=\left(a_{i,j}^{[s,t]}\right)_{i,j=1,\dots n}$ the matrix of structural constants of  $ E^{[s,t]}$.
\begin{defn} A family $\{E^{[s,t]}: s,t\in \mathbb{R}, 0\leq s\leq t\}$ of
$n$-dimensional evolution algebras over the field $\mathbb R$,
is called a {\it chain of evolution algebras} (CEA) if the matrix $\mathcal M^{[s,t]}$
of structural constants satisfies the Chapman-Kolmogorov equation
\begin{equation}\label{Chapman-Kolmogorov}
\mathcal M^{[s,t]}=\mathcal M^{[s,\tau]}\mathcal M^{[\tau,t]}, \,\,\,for \,\,\,any \,\,\,s<\tau<t.
\end{equation}
\end{defn}

In this paper we study the following known CEAs (constructed in  \cite{3}):

$E_{i}^{[s,t]}$, which correspond to the $\mathcal M_{i}^{[s,t]},\,\, i=1,2,3 $ defined as:
$$\mathcal M_{1}^{[s,t]}=\frac{h(t)}{2}\left(\begin{array}{cccc}
 \frac{1}{h(s)}+f(s)&\ \ \  \frac{1}{h(s)}+f(s)&\ \ \  \frac{1}{h(s)}+f(s)\\[2mm]
 \frac{1}{h(s)}-g(s)&\ \ \  \frac{1}{h(s)}-g(s)&\ \ \  \frac{1}{h(s)}-g(s)\\[2mm]
 g(s)-f(s)&\ \ \  g(s)-f(s)&\ \ \  g(s)-f(s)\end{array}\right) $$
where $h$, $g$ and $f$ are  arbitrary functions with $h(s)\neq0$ ;
$$\mathcal M_{2}^{[s,t]}=\frac{1}{2}\left\{\begin{array}{ll}\left(\begin{array}{cccccc}
1+\psi(s) & 1+\psi(s) & 1+\psi(s)\\[2mm]
1-\varphi(s) & 1-\varphi(s) & 1-\varphi(s)\\[2mm]
\varphi(s)-\psi(s) & \varphi(s)-\psi(s) & \varphi(s)-\psi(s)\end{array}\right),\,\,\,\,\,  \mbox{if}\,\,\,\,\, s\leq t<a,\\[2mm]
\left(\begin{array}{cccccc}
0 & 0 & 0\\
0 & 0 & 0\\
0 & 0 & 0\end{array}\right),\,\,\,\,\,\,\,\,\,\,\,\,\,\,\,\,\,\,\,\,\,\,\,\,\,\,\,\,
\,\,\,\,\,\,\,\,\,\,\,\,\,\,\,\,\,\,\,\,\,\,\,\,\,\,\,\,\,\,\,\,\,\,\,\,\,\,\,\,\,\,\,
\,\,\,\,\,\,\,\,\,\,\,\,\,\,\,\, \mbox{if}\,\,\,\,\,t\geq a,
\end{array}\right. $$
where $a>0$ and $\varphi$, $\psi$ are  arbitrary functions.
$$
\mathcal M_{3}^{[s,t]}=\theta(s)\left(\begin{array}{cccccc}
 \eta(t)& \vartheta(t) & \kappa(t)\\[1mm]
 \varphi_{1}(s)\eta(t)& \varphi_{1}(s)\vartheta(t) & \varphi_{1}(s)\kappa(t)\\[2mm]
 \varphi_{2}(s)\eta(t)& \varphi_{2}(s)\vartheta(t) & \varphi_{2}(s)\kappa(t) \end{array}\right) $$
where
$$\theta(s)=\frac{1}{\eta(s)+\varphi_{1}(s)\vartheta(s)+\varphi_{2}(s)\kappa(s)}$$
and $\eta$, $\vartheta$, $\kappa$, $\varphi_{1}$, $\varphi_{2}$  are  arbitrary functions with $ \eta(s)+\varphi_{1}(s)\vartheta(s)+\varphi_{2}(s)\kappa(s)\neq0$.

\

 In \cite{4} (see also \cite{5} for the complex field case) three-dimensional real evolution algebras with $dim(E^{2})=1$ are classified
 and twelve pairwise non-isomorphic evolution algebras are described.
 They are given by the following matrices of structural constants: \\ [1mm]

 $E_{1}: \left(\begin{array}{cccccc}
\ 1 &\  1 & 0\\
-1 & -1 & 0\\
\ 0 &\ 0 & 0\end{array}\right)$,
$E_{2}: \left(\begin{array}{cccccc}
\ 1 &\ 1 & 0\\
-1 &-1 & 0\\
\ 1 & \ 1 & 0\end{array}\right)$,
$E_{3}: \left(\begin{array}{cccccc}
\ 1 & \ 1 & 0\\
-1 & -1 & 0\\
-1 & -1 & 0\end{array}\right)$,
$E_{4}: \left(\begin{array}{cccccc}
1 & 0 & 0\\
0 & 0 & 0\\
0 & 0 & 0\end{array}\right)$,\\ [1mm]
$E_{5}: \left(\begin{array}{cccccc}
1 & 0 & 0\\
0 & 0 & 0\\
1 & 0 & 0\end{array}\right)$,
$E_{6}: \left(\begin{array}{cccccc}
\ 1 & 0 & 0\\
\ 0 & 0 & 0\\
-1 & 0 & 0\end{array}\right)$,
$E_{7}: \left(\begin{array}{cccccc}
1 & 0 & 0\\
1 & 0 & 0\\
1 & 0 & 0\end{array}\right)$,
$E_{8}: \left(\begin{array}{cccccc}
\ 1 & 0 & 0\\
\ 1 & 0 & 0\\
-1 & 0 & 0\end{array}\right)$,\\ [1mm]

$E_{9}: \left(\begin{array}{cccccc}
\ 1 & 0 & 0\\
-1 & 0 & 0\\
-1 & 0 & 0\end{array}\right)$,
$E_{10}: \left(\begin{array}{cccccc}
0 & 0 & 0\\
0 & 0 & 0\\
1 & 0 & 0\end{array}\right)$,
$E_{11}: \left(\begin{array}{cccccc}
0 & 0 & 0\\
1 & 0 & 0\\
1 & 0 & 0\end{array}\right)$,
$E_{12}: \left(\begin{array}{cccccc}
\ 0 & 0 & 0\\
\ 1 & 0 & 0\\
-1 & 0 & 0\end{array}\right).$
\\

\textbf {The main problem} of this paper: For each CEA $E_{i}^{[s,t]}$, $i=1,2,3$ (listed above), to classify all EAs
involved in the CEA. That is for each algebra $E_j$, $j=1,2,\dots, 12$ to describe the set of two-dimensional times $(s,t)$
for which $E_{i}^{[s,t]}$ is isomorphic to the algebra $E_j$.

\section{On classification of algebras in chains  $ E_{i}^{[s,t]},\  i=1,2,3$.}

To give classification of algebras in  three-dimensional real chains of evolution algebras we shall prove some lemmas,
 which are important to prove the main theorems of this section.

\subsection{The case $ E_{1}^{[s,t]}$.} Any structural constants matrix of  $ E_{1}^{[s,t]}$ has the form
as in the following lemma.
\begin{lemma}
 The real evolution algebra corresponding to the matrix

$\mathcal{M}=\left(\begin{array}{cccccc}
\lambda & \lambda & \lambda\\
\mu & \mu & \mu\\
\gamma & \gamma & \gamma\end{array}\right)$ with conditions $ \lambda+\mu+\gamma\neq0$ is isomorphic  to one of the following algebras:
 \

\begin {itemize}
 \item[(a)] $E_{4}$ if one of the following conditions is hold:
 \item[1)] $\lambda\neq0, \ \mu=0, \ \gamma=0$;
 \item[2)] $\mu\neq0, \  \lambda=0, \ \gamma=0$;
 \item[3)] $\gamma\neq0, \ \lambda=0,\ \mu=0$;\\

 \item[(b)] $E_{5}$ if one of  the following conditions is hold:
  \item[1)] $\lambda=0, \ \mu\gamma>0$;
 \item[2)] $\mu=0, \ \lambda\gamma>0$;
 \item[3)] $\gamma=0, \ \lambda\mu>0$;\\

 \item[(c)] $E_{6}$ if one of  the following conditions is hold:
  \item[1)] $\lambda=0, \ \mu\gamma<0$;
 \item[2)] $\mu=0, \  \lambda\gamma<0$;
 \item[3)] $\gamma=0, \ \lambda\mu<0$;\\

 \item[(d)] $E_{7}$ if  the following conditions is hold:
 \item[] $\lambda\mu\gamma\neq 0, \ \lambda \mu(\lambda+\mu)(\lambda+\mu + \gamma)>0,\ \gamma(\lambda+\mu)>0;$\\

 \item[(e)] $E_{8}$ if one  of the following conditions is hold:
 \item[1)] $\lambda\mu\gamma\neq 0, \ \lambda \mu(\lambda+\mu)(\lambda+\mu + \gamma)>0,\ \gamma(\lambda+\mu)<0;$
 \item[2)] $\lambda\mu\gamma\neq 0, \ \lambda \mu(\lambda+\mu)(\lambda+\mu + \gamma)<0,\ \gamma(\lambda+\mu)>0;$
 \item[3)] $\lambda\mu\gamma\neq 0,  \ \lambda+\mu=0;$\\

 \item[(f)] $E_{9}$ if  the following conditions is hold:
 \item[] $\lambda\mu\gamma\neq 0, \ \lambda \mu(\lambda+\mu)(\lambda+\mu + \gamma)<0,\ \gamma(\lambda+\mu)<0.$\\
 \end {itemize}
\end{lemma}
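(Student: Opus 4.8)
The plan is to reduce the whole problem to the inertia of a single symmetric bilinear form. Write $(c_1,c_2,c_3)=(\lambda,\mu,\gamma)$ and $w=e_1+e_2+e_3$. Reading $\mathcal M$ row by row gives $e_i^2=c_i w$ and $e_ie_j=0$ for $i\neq j$, whence $w^2=(\lambda+\mu+\gamma)w=:dw$ with $d\neq0$ by hypothesis. Thus $p:=\tfrac1d w$ is a nonzero idempotent, and $A^2=\operatorname{span}\{c_1w,c_2w,c_3w\}=\langle p\rangle$ is one-dimensional. This already explains why the resulting algebra lies among the $\dim E^2=1$ list, and in fact among $E_4,\dots,E_9$, namely those whose square is generated by an idempotent. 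I would also record here that the idempotent generator of $A^2$ is \emph{unique}: if $\alpha p$ is idempotent then $\alpha^2=\alpha$, so $\alpha=1$.

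Next I would encode all products in a bilinear form. For $x=\sum x_ie_i$ one gets $x^2=(\lambda x_1^2+\mu x_2^2+\gamma x_3^2)w=d(\lambda x_1^2+\mu x_2^2+\gamma x_3^2)\,p$, so the multiplication is governed by the symmetric bilinear form $B$ defined by $xy=B(x,y)p$, whose matrix in the basis $(e_1,e_2,e_3)$ is $\operatorname{diag}(\lambda d,\mu d,\gamma d)$; note $B(p,p)=1$. Any algebra isomorphism carries the unique idempotent generator of $A^2$ to that of the target and hence is a $B$-isometry; conversely, since $p$ is non-isotropic, $A=\mathbb{R}p\oplus p^{\perp}$, and choosing a $B$-orthogonal basis $f_1=p,f_2,f_3$ and rescaling so that $f_i^2=\epsilon_i f_1$ with $\epsilon_i\in\{0,\pm1\}$ produces a natural (evolution) basis realizing the canonical matrix $\operatorname{diag}(1,\epsilon_2,\epsilon_3)$. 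Hence the isomorphism type equals the inertia $(n_+,n_-,n_0)$ of $B$, i.e.\ the numbers of positive, negative and zero entries among $\lambda d,\mu d,\gamma d$. Since $\lambda d+\mu d+\gamma d=d^2>0$ we always have $n_+\geq1$, and matching with the tables gives $E_4\leftrightarrow(1,0,2)$, $E_5\leftrightarrow(2,0,1)$, $E_6\leftrightarrow(1,1,1)$, $E_7\leftrightarrow(3,0,0)$, $E_8\leftrightarrow(2,1,0)$, $E_9\leftrightarrow(1,2,0)$.

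It then remains to translate the inertia into the stated sign conditions. In the degenerate cases (some $c_i=0$) one reads off $n_0=\#\{i:c_i=0\}$ directly: exactly one nonzero $c_i$ gives $d=c_i$ and $c_id=c_i^2>0$, hence $(1,0,2)$ and $E_4$ (case (a)); exactly two nonzero $c_i$ give $(2,0,1)$ or $(1,1,1)$ according as their product is positive or negative, i.e.\ $E_5$ (case (b)) or $E_6$ (case (c)). In the nondegenerate case $\lambda\mu\gamma\neq0$ one uses $\operatorname{sign}(c_id)=\operatorname{sign}(c_i)\operatorname{sign}(d)$, so $n_+$ counts the $c_i$ whose sign agrees with $\operatorname{sign}(d)$; equivalently $n_+=m_+$ if $d>0$ and $n_+=m_-$ if $d<0$, where $m_\pm$ count the positive/negative $c_i$. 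I would capture this through $Q:=\gamma(\lambda+\mu)$ and $P:=\lambda\mu(\lambda+\mu)(\lambda+\mu+\gamma)$, proving $P>0\wedge Q>0\iff \lambda,\mu,\gamma$ share one sign $\iff n_+=3$ ($E_7$, case (d)); $P<0\wedge Q<0\iff n_+=1$ ($E_9$, case (f)); while the mixed sign patterns $(\operatorname{sign}P,\operatorname{sign}Q)\in\{(+,-),(-,+)\}$ together with the boundary $\lambda+\mu=0$ (where $P=Q=0$, forcing $c_1d,c_2d$ to be of opposite sign and $c_3d=\gamma^2>0$, so $n_+=2$) give $E_8$, exactly the three subcases of (e).

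The conceptual step, that inertia is a complete invariant, is a short Sylvester/Witt argument once uniqueness of the idempotent generator and the orthogonalization $A=\mathbb{R}p\oplus p^{\perp}$ are in place. I expect the genuine obstacle to be the sign bookkeeping in the nondegenerate cases: one must check that $(\operatorname{sign}P,\operatorname{sign}Q)$ really pins down $n_+$ despite $d$ depending on all three parameters, and that the degenerate boundary $\lambda+\mu=0$ (where both $P$ and $Q$ vanish) is handled separately rather than being absorbed into a strict inequality. I would discharge this by an exhaustive but routine case analysis over the sign pattern of $(\lambda,\mu,\gamma)$ and of $d$, verifying in particular that the three subcases of (e) together exhaust precisely $n_+=2$.
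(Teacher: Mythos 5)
Your proposal is correct, and it takes a genuinely different route from the paper. The paper proceeds algebra by algebra: for each target $E_j$ it writes the system of quadratic equations that a change of basis must satisfy, then either derives a contradiction (for $E_1,E_2,E_3,E_{10},E_{11},E_{12}$) or exhibits an explicit solution, splitting into many cases according to which of $\lambda,\mu,\gamma$ vanish and the signs of $\lambda\mu$, $\gamma(\lambda+\mu)$, etc. You instead observe that $E_{\mathcal M}^2=\langle p\rangle$ with $p=\tfrac{1}{\lambda+\mu+\gamma}(e_1+e_2+e_3)$ the unique nonzero idempotent of $E_{\mathcal M}^2$, encode the multiplication as $xy=B(x,y)p$ with $B=\operatorname{diag}(\lambda d,\mu d,\gamma d)$, $d=\lambda+\mu+\gamma$, and reduce the classification to the inertia of $B$ via Sylvester's law; the orthogonal splitting $A=\mathbb{R}p\oplus p^{\perp}$ then produces the canonical evolution basis directly, without Witt's theorem even in the degenerate cases. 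This buys several things at once: necessity as well as sufficiency of the stated conditions, pairwise non-isomorphism of $E_4,\dots,E_9$, and the exclusion of the other six algebras (whose $A^2$ is generated by a square-zero element rather than an idempotent) -- all of which the paper must argue separately. It also explains conceptually why the conditions take the form they do: $\operatorname{sign}\bigl(\gamma(\lambda+\mu)\bigr)$ and $\operatorname{sign}\bigl(\lambda\mu(\lambda+\mu)(\lambda+\mu+\gamma)\bigr)$ together pin down $n_+\in\{1,2,3\}$, with the boundary $\lambda+\mu=0$ forcing $n_+=2$. I checked your sign bookkeeping (using $n_+ + n_- + n_0=3$, $\lambda d+\mu d+\gamma d=d^2>0$, and the five possible sign patterns of the pair $(P,Q)$) and it matches the lemma exactly, including the three subcases of (e). The residual cost of your approach is precisely that case analysis, which you correctly identify as the remaining routine work; the paper's approach, by contrast, yields explicit isomorphisms in each case, which is occasionally useful but much longer.
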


\begin{proof}
Let $E_{\mathcal{M}}$ be the evolution algebra with basis $\{e_{1},e_{2},e_{3}\}$. Then,
the multiplication table in $E_{\mathcal{M}}$ is
\begin{equation}\label{3.1}\begin{array}{c}
e_{1}e_{1}=\lambda e_{1}+\lambda e_{2}+\lambda e_{3}, \ e_{2}e_{2}=\mu e_{1}+\mu e_{2}+\mu e_{3},\\ [1mm]
 e_{3}e_{3}=\gamma e_{1}+\gamma e_{2}+\gamma e_{3}, \ e_{1}e_{2}=e_{1}e_{3}=e_{2}e_{3}=0.
 \end{array}
\end{equation}

 Let us consider $E_{i}, \ i=1,...,12$ listed in previous section. First we consider $E_{1}$, the multiplication table in $E_{1}$ is
\begin{equation}\label{3.2} \begin{array}{c}
e_{1}'e_{1}'= e_{1}'+e_{2}',\  e_{2}'e_{2}'= -e_{1}'-e_{2}', \ e_{3}'e_{3}'= 0, \\[1mm]
e_{1}'e_{2}'=0,\ e_{1}'e_{3}'=0,\ e_{2}'e_{3}'=0.
\end{array}
\end{equation}

Assume that  $E_{\mathcal{M}}$ is isomorphic to the algebra $E_{1}$. Then there exists a change of basis as follows
\begin{equation}\label{3.3}\begin{array}{c}
 e_{1}'=x_{1}e_{1}+x_{2}e_{2}+x_{3}e_{3},\\
e_{2}'=y_{1}e_{1}+y_{2}e_{2}+y_{3}e_{3},\\
e_{3}'=z_{1}e_{1}+z_{2}e_{2}+z_{3}e_{3}
\end{array}
\end{equation}
where determinant of the change is non-zero, i.e.,
\begin{equation}\label{3.4}
x_{1}y_{2}z_{3}+x_{3}y_{1}z_{2}+x_{2}y_{3}z_{1}\neq x_{3}y_{2}z_{1}+x_{2}y_{1}z_{3}+x_{1}y_{3}z_{2}.
\end{equation}
From the equalities (\ref{3.1}) and (\ref{3.3})  we obtain the following equations:
$$ \begin{array}{c}
e_{1}'e_{1}'=(x_{1}e_{1}+x_{2}e_{2}+x_{3}e_{3})(x_{1}e_{1}+x_{2}e_{2}+x_{3}e_{3})=(\lambda x_{1}^{2}+\mu x_{2}^{2}+\gamma x_{3}^{2})(e_{1}+e_{2}+e_{3}),\\[1mm]
e_{2}'e_{2}'=(y_{1}e_{1}+y_{2}e_{2}+y_{3}e_{3})(y_{1}e_{1}+y_{2}e_{2}+y_{3}e_{3})=(\lambda y_{1}^{2}+\mu y_{2}^{2}+\gamma y_{3}^{2})(e_{1}+e_{2}+e_{3}),\\[1mm]
e_{3}'e_{3}'=(z_{1}e_{1}+z_{2}e_{2}+z_{3}e_{3})(z_{1}e_{1}+z_{2}e_{2}+z_{3}e_{3})=(\lambda z_{1}^{2}+\mu z_{2}^{2}+\gamma z_{3}^{2})(e_{1}+e_{2}+e_{3}),\\[1mm]
e_{1}'e_{2}'=(x_{1}e_{1}+x_{2}e_{2}+x_{3}e_{3})(y_{1}e_{1}+y_{2}e_{2}+y_{3}e_{3})=(\lambda x_{1}y_{1}+\mu x_{2}y_{2}+\gamma x_{3}y_{3})(e_{1}+e_{2}+e_{3}),\\[1mm]
e_{1}'e_{3}'=(x_{1}e_{1}+x_{2}e_{2}+x_{3}e_{3})(z_{1}e_{1}+z_{2}e_{2}+z_{3}e_{3})=(\lambda x_{1}z_{1}+\mu x_{2}z_{2}+\gamma x_{3}z_{3})(e_{1}+e_{2}+e_{3}),\\[1mm]
e_{2}'e_{3}'=(y_{1}e_{1}+y_{2}e_{2}+y_{3}e_{3})(z_{1}e_{1}+z_{2}e_{2}+z_{3}e_{3})=(\lambda y_{1}z_{1}+\mu y_{2}z_{2}+\gamma y_{3}z_{3})(e_{1}+e_{2}+e_{3}),\\ [1mm]
e_{1}'+e_{2}'=(x_{1}+y_{1})e_{1}+(x_{2}+y_{2})e_{2}+(x_{3}+y_{3})e_{3},\\[1mm]
-e_{1}'-e_{2}'=(-x_{1}-y_{1})e_{1}+(-x_{2}-y_{2})e_{2}+(-x_{3}-y_{3})e_{3}.
\end{array}
$$
Consequently, being $\{{e_{1}', e_{2}', e_{3}'}\}$ is the  basis of evolution algebra and from the equalities (\ref{3.2})  we have
 \begin{equation}\label{3.5}  \left\{\begin{array}{ll}
  \lambda x_{1}^{2}+\mu x_{2}^{2}+\gamma x_{3}^{2}=x_{1}+y_{1}=x_{2}+y_{2}=x_{3}+y_{3}\\
  \lambda y_{1}^{2}+\mu y_{2}^{2}+\gamma y_{3}^{2}=-x_{1}-y_{1}=-x_{2}-y_{2}=-x_{3}-y_{3}\\
  \lambda z_{1}^{2}+\mu z_{2}^{2}+\gamma z_{3}^{2}=0\\
  \lambda x_{1}y_{1}+\mu x_{2}y_{2}+\gamma x_{3}y_{3}=0\\
  \lambda x_{1}z_{1}+\mu x_{2}z_{2}+\gamma x_{3}z_{3}=0\\
  \lambda y_{1}z_{1}+\mu y_{2}z_{2}+\gamma y_{3}z_{3}=0 \ .\
  \end{array} \right.
\end{equation}
If we add the right and left sides of the first and second equations of the system (\ref{3.5}), respectively, we obtain the following equation
$$ \lambda (x_{1}^{2}+y_{1}^{2})+\mu (x_{2}^{2}+y_{2}^{2})+ \gamma (x_{3}^{2}+y_{3}^{2})=0. $$
From this equation we get
$$ \begin{array}{c}
\lambda(x_{1}+y_{1})^{2}-2\lambda x_{1}y_{1}+\mu (x_{2}+y_{2})^{2}-2\mu x_{2}y_{2}+\gamma (x_{3}+y_{3})^{2}-2\gamma x_{3}y_{3}=0 \\
 \lambda(x_{1}+y_{1})^{2}+\mu (x_{2}+y_{2})^{2}+\gamma (x_{3}+y_{3})^{2}-2(\lambda x_{1}y_{1}+\mu x_{2}y_{2}+\gamma x_{3}y_{3})=0.
 \end{array} $$
Denote  \   $ x_{1}+y_{1}=x_{2}+y_{2}=x_{3}+y_{3}=k $. If we use the fourth equation of the system (\ref{3.5}), then  this equation will be
$$ \lambda k^{2}+ \mu k^{2} +\gamma k^{2}=0, \ k^{2}(\lambda+\mu+\gamma)=0, \ k^{2}(\lambda+ \mu +\gamma)=0. $$
By the condition of the lemma $\lambda+ \mu +\gamma\neq 0 $, then $k=0$. It means that
$$ x_{1}+y_{1}=x_{2}+y_{2}=x_{3}+y_{3}=0, \ x_{1}=-y_{1}, \ x_{2}=-y_{2}, \ x_{3}=-y_{3}.$$
It is a contradiction to (\ref{3.4}), hence  $E_{\mathcal{M}}$ is not isomorphic to the algebra $E_{1}$.

Now, let us consider the algebras $E_{2}$ and $E_{3}$. For these algebras systems of equations similar to (\ref{3.5}) are  as follows, respectively
$$
\left\{\begin{array}{ll}
  \lambda x_{1}^{2}+\mu x_{2}^{2}+\gamma x_{3}^{2}=x_{1}+y_{1}=x_{2}+y_{2}=x_{3}+y_{3}\\
  \lambda y_{1}^{2}+\mu y_{2}^{2}+\gamma y_{3}^{2}=-x_{1}-y_{1}=-x_{2}-y_{2}=-x_{3}-y_{3}\\
  \lambda z_{1}^{2}+\mu z_{2}^{2}+\gamma z_{3}^{2}=x_{1}+y_{1}=x_{2}+y_{2}=x_{3}+y_{3}\\
  \lambda x_{1}y_{1}+\mu x_{2}y_{2}+\gamma x_{3}y_{3}=0\\
  \lambda x_{1}z_{1}+\mu x_{2}z_{2}+\gamma x_{3}z_{3}=0\\
  \lambda y_{1}z_{1}+\mu y_{2}z_{2}+\gamma y_{3}z_{3}=0\
  \end{array} \right.
$$
and
$$
\left\{\begin{array}{ll}
  \lambda x_{1}^{2}+\mu x_{2}^{2}+\gamma x_{3}^{2}=x_{1}+y_{1}=x_{2}+y_{2}=x_{3}+y_{3}\\
  \lambda y_{1}^{2}+\mu y_{2}^{2}+\gamma y_{3}^{2}=-x_{1}-y_{1}=-x_{2}-y_{2}=-x_{3}-y_{3}\\
  \lambda z_{1}^{2}+\mu z_{2}^{2}+\gamma z_{3}^{2}=-x_{1}-y_{1}=-x_{2}-y_{2}=-x_{3}-y_{3}\\
  \lambda x_{1}y_{1}+\mu x_{2}y_{2}+\gamma x_{3}y_{3}=0\\
  \lambda x_{1}z_{1}+\mu x_{2}z_{2}+\gamma x_{3}z_{3}=0\\
  \lambda y_{1}z_{1}+\mu y_{2}z_{2}+\gamma y_{3}z_{3}=0 \ .\
  \end{array} \right.
$$
Similar to the above, it is not difficult to show that $E_{\mathcal{M}}$ is not isomorphic to the algebras $E_{2}$ and $E_{3}$.

Let us consider the next cases.

 \textbf{Case 1.} One of $\lambda, \ \mu, \ \gamma $ is not equal to zero.

 \textbf{Case 1.1.} Let $\lambda\neq0, \ \mu=0, \ \gamma=0$. We consider the algebra $E_{4}$.
The multiplication table in $E_{4}$ is
$$ \begin{array}{c}
e_{1}'e_{1}'= e_{1}',\  e_{2}'e_{2}'=0, \ e_{3}'e_{3}'= 0, \\[1mm]
 e_{1}'e_{2}'=0,\ e_{1}'e_{3}'=0,\ e_{2}'e_{3}'=0.
\end{array}
$$
In this case the system of equations similar to (\ref{3.5}) is  as follows
$$
\left\{\begin{array}{ll}
  \lambda x_{1}^{2}=x_{1}=x_{2}=x_{3}\\
  \lambda y_{1}^{2}=0\\
  \lambda z_{1}^{2}=0\\
  \lambda x_{1}y_{1}=0\\
  \lambda x_{1}z_{1}=0\\
  \lambda y_{1}z_{1}=0\
  \end{array} \right.
$$
If we solve the first equation of this system  then there will be two solutions:

\textbf{1)} If $x_{1}=0$ then  $x_{2}=x_{3}=0$. It is contradiction to (\ref{3.4}).

\textbf{2)} If $x_{1}=\frac{1}{\lambda}$ then  $x_{2}=x_{3}=\frac{1}{\lambda}$ and $y_{1}=z_{1}=0$.
 In this case the coefficients of the change of basis (\ref{3.3}) are as follows
$ x_{1}=x_{2}=x_{3}=\frac{1}{\lambda},$ $  y_{1}=z_{1}=0 $  and
 $ y_{2}, \ y_{3}, \ z_{2}, \ z_{3}$ are arbitrary real numbers such that satisfy the condition $ y_{2}z_{3}\neq y_{3}z_{2}.$

 Without loss of  generality we can take $y_{2}=1,\ y_{3}=0, \ z_{2}=0, z_{3}=1$. So by the change of basis $ e_{1}'=\frac{1}{\lambda}e_{1}+\frac{1}{\lambda}e_{2}+\frac{1}{\lambda}e_{3}$,
 $ e_{2}'= e_{2}, \ e_{3}'= e_{3}$ we can see that the algebras $E_{\mathcal{M}}$ and $E_{4}$ are isomorphic.

\textbf{Case 1.2.}
If  $\lambda=0, \  \gamma=0, \ \mu\neq 0$  then we take the following  basis change
$\{e_{2},\  e_{1}, \ e_{3} \}$    and we have case similar to the Case 1.1.

\textbf{Case 1.3.}
If $\lambda=0, \ \mu=0, \ \gamma\neq0 $  then we take the following  basis change
 $\{e_{3},\  e_{2}, \ e_{1} \}$ and we have case similar to the Case 1.1.

\textbf{Case 2.} Two of $\lambda, \ \mu, \ \gamma $ are not equal to zero.

\textbf{Case 2.1.}
 Let $\lambda=0, \ \mu\gamma>0$. We consider the algebra $E_{5}$.  The multiplication table in $E_{5}$ is
$$ \begin{array}{c}
e_{1}'e_{1}'= e_{1}',\  e_{2}'e_{2}'=0, \ e_{3}'e_{3}'=e_{1}' , \\[1mm]
 e_{1}'e_{2}'=0,\ e_{1}'e_{3}'=0,\ e_{2}'e_{3}'=0.
\end{array}
$$
In this case the system of equations similar to (\ref{3.5}) is  as follows
\begin{equation}\label{3.6}
\left\{\begin{array}{ll}
  \mu x_{2}^{2}+\gamma x_{3}^{2}=x_{1}=x_{2}=x_{3}\\
  \mu y_{2}^{2}+\gamma y_{3}^{2}=0\\
  \mu z_{2}^{2}+\gamma z_{3}^{2}=x_{1}=x_{2}=x_{3}\\
  \mu x_{2}y_{2}+\gamma x_{3}y_{3}=0\\
  \mu x_{2}z_{2}+\gamma x_{3}z_{3}=0\\
  \mu y_{2}z_{2}+\gamma y_{3}z_{3}=0 \ .\
  \end{array} \right.
\end{equation}
From the  first equation of this system  we have the following solutions:

 \textbf{1)}  $ x_{1}=x_{2}=x_{3}=0$. It is contradiction to (\ref{3.4}).

 \textbf{2)} $ x_{1}=x_{2}=x_{3}=\frac{1}{\mu + \gamma}$. In this case from the remaining equations of (\ref{3.6}) we obtain the following system
 $$
 \left\{\begin{array}{ll}
  \mu y_{2}^{2}+\gamma y_{3}^{2}=0\\
  \mu z_{2}^{2}+\gamma z_{3}^{2}=\frac{1}{\mu + \gamma}\\
  \mu y_{2}+\gamma y_{3}=0\\
  \mu z_{2}+\gamma z_{3}=0\\
  \mu y_{2}z_{2}+\gamma y_{3}z_{3}=0.\
  \end{array} \right.
 $$
Solution of this system is \  $y_{2}=0, \ y_{3}=0, \ z_{2}=\pm \sqrt{\frac{\gamma}{\mu}}\frac{1}{\mu+\gamma}, \ z_{3}=\mp \sqrt{\frac{\mu}{\gamma}}\frac{1}{\mu+\gamma}$.

We may take the coefficients of the change of basis (\ref{3.3})  as follows
$$ \begin{array}{ll}
x_{1}=x_{2}=x_{3}=\frac{1}{\mu + \gamma}, \ y_{1}=\alpha, \ y_{2}=0,\ y_{3}=0, \\[2mm]
 z_{1}=\beta, \ z_{2}=\pm \sqrt{\frac{\gamma}{\mu}}\frac{1}{\mu+\gamma},\
z_{3}=\mp \sqrt{\frac{\mu}{\gamma}}\frac{1}{\mu+\gamma},\end{array}  $$
 where  $ \alpha\neq 0, \ \alpha,  \beta\in\mathbb{R}.$
So, we may take the change of basis
$$ e_{1}'=\frac{1}{\mu + \gamma}e_{1}+\frac{1}{\mu + \gamma}e_{2}+\frac{1}{\mu + \gamma}e_{3}, \ e_{2}'=\alpha e_{1}, \ e_{3}'=\beta e_{1}+\sqrt{\frac{\gamma}{\mu}}\frac{1}{\mu+\gamma}e_{2}-
\sqrt{\frac{\mu}{\gamma}}\frac{1}{\mu+\gamma}e_{3}  $$ and
the determinant of the matrix of this change is equal to  $D=\frac{\alpha}{(\mu + \gamma)\sqrt{\mu\gamma}}\neq 0 $. By this change of basis we can see that  $E_{\mathcal{M}}$ and $E_{5}$ are isomorphic.

\textbf{Case 2.2.}
Let  $\lambda=0, \ \mu\gamma<0$. By the condition of the lemma $\mu+\gamma\neq0$. In these case we may take the change of basis
$$ e_{1}'=\frac{1}{\mu + \gamma}e_{1}+\frac{1}{\mu + \gamma}e_{2}+\frac{1}{\mu + \gamma}e_{3}, \ e_{2}'=\alpha e_{1}, \ e_{3}'=\beta e_{1}+\sqrt{-\frac{\gamma}{\mu}}\frac{1}{\mu+\gamma}e_{2}-
\sqrt{-\frac{\mu}{\gamma}}\frac{1}{\mu+\gamma}e_{3}  $$ and
the determinant of the matrix of this change is equal to  $D=\frac{\alpha}{(\mu + \gamma)^{2}} (\sqrt{-\frac{\gamma}{\mu}}+\sqrt{-\frac{\mu}{\gamma}})\neq 0 $.
By this change of basis we can see that  $E_{\mathcal{M}}$ is isomorphic to $E_{6}$.

For the cases
\begin {itemize}
 \item[(i)]  $\mu=0, \ \lambda\gamma>0$ and $ \mu=0, \ \lambda\gamma<0$;
 \item[(ii)] $ \gamma=0 , \ \lambda\mu>0$ and $ \gamma=0 , \ \lambda\mu<0$,
\end {itemize}
respectively, if we take the  changes of  basis as follows
$\{e_{2},\  e_{1}, \ e_{3} \}$ ,  $\{e_{3},\  e_{2}, \ e_{1} \}$ then we will have cases similar to the  Case 2.1 and Case 2.2.
\

\textbf{Case 3.} None of $\lambda, \ \mu, \ \gamma $ is zero, i.e $\lambda\mu\gamma\neq 0$.

\textbf{Case 3.1.}
Let
\begin{equation}\label{3.7}
   \lambda \mu(\lambda+\mu)(\lambda+\mu + \gamma)>0, \ \gamma(\lambda+\mu)>0.
\end{equation}
We consider the algebra $E_{7}$.
The multiplication table in $E_{7}$ is
$$ \begin{array}{c}
e_{1}'e_{1}'= e_{1}',\  e_{2}'e_{2}'=e_{1}', \ e_{3}'e_{3}'=e_{1}' , \\[1mm]
 e_{1}'e_{2}'=0,\ e_{1}'e_{3}'=0,\ e_{2}'e_{3}'=0.
\end{array}
$$
In this case the system of equations similar to (\ref{3.5}) is  as follows
\begin{equation}\label{3.8}
\left\{\begin{array}{ll}
 \lambda x_{1}^{2}+ \mu x_{2}^{2}+\gamma x_{3}^{2}=x_{1}=x_{2}=x_{3}\\
  \lambda y_{1}^{2}+\mu y_{2}^{2}+\gamma y_{3}^{2}=x_{1}=x_{2}=x_{3}\\
 \lambda z_{1}^{2}+ \mu z_{2}^{2}+\gamma z_{3}^{2}=x_{1}=x_{2}=x_{3}\\
  \lambda x_{1}y_{1}+\mu x_{2}y_{2}+\gamma x_{3}y_{3}=0\\
  \lambda x_{1}z_{1}+\mu x_{2}z_{2}+\gamma x_{3}z_{3}=0\\
  \lambda y_{1}z_{1}+\mu y_{2}z_{2}+\gamma y_{3}z_{3}=0 \ .\
  \end{array} \right.
\end{equation}
If we solve the first equation of this system then:

 \textbf{1)}  $ x_{1}=x_{2}=x_{3}=0$. It is contradiction to (\ref{3.4}).

\textbf{2)} $ x_{1}=x_{2}=x_{3}=\frac{1}{\lambda+\mu + \gamma}$. In this case from the remaining equations of (\ref{3.8}) we obtain the following system
 \begin{equation}\label{3.9}
 \left\{\begin{array}{ll}
  \lambda y_{1}^{2}+\mu y_{2}^{2}+\gamma y_{3}^{2}=\frac{1}{\lambda+\mu + \gamma}\\
 \lambda z_{1}^{2}+ \mu z_{2}^{2}+\gamma z_{3}^{2}=\frac{1}{\lambda+\mu + \gamma}\\
  \lambda y_{1}+\mu y_{2}+\gamma y_{3}=0\\
  \lambda z_{1}+\mu z_{2}+\gamma z_{3}=0\\
  \lambda y_{1}z_{1}+\mu y_{2}z_{2}+\gamma y_{3}z_{3}=0 \ .\
  \end{array} \right.
 \end{equation}
Without loss of generality we solve the system (\ref{3.9}) for the case $y_{3}=0$. Then we  have
\begin{equation}\label{3.10}
\left\{\begin{array}{ll}
  \lambda y_{1}^{2}+\mu y_{2}^{2}=\frac{1}{\lambda+\mu + \gamma}\\
 \lambda y_{1}+\mu y_{2}=0\\
 \lambda z_{1}^{2}+ \mu z_{2}^{2}+\gamma z_{3}^{2}=\frac{1}{\lambda+\mu + \gamma}\\
  \lambda z_{1}+\mu z_{2}+\gamma z_{3}=0\\
  \lambda y_{1}z_{1}+\mu y_{2}z_{2}=0 \ .\
  \end{array} \right.
\end{equation}
From the second equation of the system (\ref{3.10}) we find $ y_{1}=-\frac{\mu}{\lambda}y_{2}$ and  put it in the first equation of this system then we have
the next equation
$$\frac{\mu^{2}}{\lambda}y_{2}^{2}+\mu y_{2}^{2}=\frac{1}{\lambda+\mu + \gamma}. $$
 The solution of this equation is $ y_{2}=\pm \sqrt{\frac{\lambda}{\mu(\lambda+\mu)(\lambda+\mu + \gamma)}} $. Note that, if the condition (\ref{3.7}) is hold then  $\lambda\mu>0$ .
Thus  $ y_{1}=\mp \sqrt{\frac{\mu}{\lambda(\lambda+\mu)(\lambda+\mu + \gamma)}}.$

So, we may take  $ y_{1}=- \sqrt{\frac{\mu}{\lambda(\lambda+\mu)(\lambda+\mu + \gamma)}}$ and $ y_{2}= \sqrt{\frac{\lambda}{\mu(\lambda+\mu)(\lambda+\mu + \gamma)}}$.
 From the last equation of (\ref{3.10}) we have
$$
-\lambda \sqrt{\frac{\mu}{\lambda(\lambda+\mu)(\lambda+\mu + \gamma)}} z_{1}+\mu \sqrt{\frac{\lambda}{\mu(\lambda+\mu)(\lambda+\mu + \gamma)}} z_{2}=0.
$$
 It means  that $z_{1}= z_{2}$.  From this equality and from the third and fourth equations of the system (\ref{3.10}) we have the next system of equations
$$
\left\{\begin{array}{ll}
 \lambda z_{1}^{2}+ \mu z_{1}^{2}+\gamma z_{3}^{2}=\frac{1}{\lambda+\mu + \gamma}\\
  \lambda z_{1}+\mu z_{1}+\gamma z_{3}=0.\
  \end{array} \right.
$$
Solution of this system is $ z_{1}=\pm \sqrt{\frac{\gamma}{\lambda+\mu}}\frac{1}{\lambda+\mu + \gamma}, \ z_{3}=\mp \sqrt{\frac{\lambda+\mu}{\gamma}}\frac{1}{\lambda+\mu + \gamma}.$

Thus,  one of the solution of the system (\ref{3.8}) is
$$ \begin{array}{ccc}
 x_{1}=x_{2}=x_{3}=\frac{1}{\lambda+\mu + \gamma}, \
 y_{1}=- \sqrt{\frac{\mu}{\lambda(\lambda+\mu)(\lambda+\mu + \gamma)}}, \ y_{2}= \sqrt{\frac{\lambda}{\mu(\lambda+\mu)(\lambda+\mu + \gamma)}}, \ y_{3}=0, \\[1mm]
 z_{1}=z_{2}= \sqrt{\frac{\gamma}{\lambda+\mu}}\frac{1}{\lambda+\mu + \gamma}, \ z_{3}=- \sqrt{\frac{\lambda+\mu}{\gamma}}\frac{1}{\lambda+\mu + \gamma}.
\end{array}
$$
Consequently, if  we  take the following change of basis
\begin{equation}\label{3.11}
\begin{array}{ccc}
 e_{1}'=\frac{1}{\lambda+\mu + \gamma}e_{1}+\frac{1}{\lambda+\mu + \gamma}e_{2}+\frac{1}{\lambda+\mu + \gamma}e_{3},\\[1mm]
 e_{2}'=- \sqrt{\frac{\mu}{\lambda(\lambda+\mu)(\lambda+\mu + \gamma)}}e_{1}+\sqrt{\frac{\lambda}{\mu(\lambda+\mu)(\lambda+\mu + \gamma)}}e_{2},\\[1mm]
 e_{3}'=\sqrt{\frac{\gamma}{\lambda+\mu}}\frac{1}{\lambda+\mu + \gamma}e_{1}+\sqrt{\frac{\gamma}{\lambda+\mu}}\frac{1}{\lambda+\mu + \gamma}e_{2}-
  \sqrt{\frac{\lambda+\mu}{\gamma}}\frac{1}{\lambda+\mu + \gamma}e_{3}
\end{array}
\end{equation}
then the algebra $E_{\mathcal{M}}$ will be  isomorphic to $E_{7}$ .
Indeed, according to the (\ref{3.1})
$$\begin{array}{ccc}
e_{1}'e_{1}'=\frac{1}{({\lambda+\mu + \gamma})^{2}} e_{1}^{2}+ \frac{1}{({\lambda+\mu + \gamma})^{2}} e_{2}^{2}+ \frac{1}{({\lambda+\mu + \gamma})^{2}}
e_{3}^{2}=\frac{e_{1}+e_{2}+e_{3}}{\lambda+\mu + \gamma}=e_{1}',\\[1mm]
e_{2}'e_{2}'=\frac{\mu}{\lambda(\lambda+\mu)(\lambda+\mu + \gamma)}e_{1}^{2}+\frac{\lambda}{\mu (\lambda+\mu)(\lambda+\mu + \gamma)}e_{2}^{2}=
\frac{\mu (e_{1}+e_{2}+e_{3})}{(\lambda+\mu)(\lambda+\mu + \gamma)}+\frac{\lambda (e_{1}+e_{2}+e_{3})}{ (\lambda+\mu)(\lambda+\mu + \gamma)}=e_{1}',\\[1mm]
e_{3}'e_{3}'=\frac{\gamma }{\lambda+\mu} \frac{e_{1}^{2}}{(\lambda+\mu + \gamma)^{2}}+\frac{\gamma }{\lambda+\mu} \frac{e_{2}^{2}}{(\lambda+\mu + \gamma)^{2}}
+\frac{ \lambda+\mu}{\gamma} \frac{e_{3}^{2}}{(\lambda+\mu + \gamma)^{2}}=\frac{\lambda\gamma}{(\lambda+\mu)(\lambda+\mu + \gamma)^{2}}(e_{1}+e_{2}+e_{3})+ \\[1mm]
+\frac{\mu\gamma}{(\lambda+\mu)(\lambda+\mu + \gamma)^{2}}(e_{1}+e_{2}+e_{3})+ \frac{\lambda+\mu}{(\lambda+\mu + \gamma)^{2}}(e_{1}+e_{2}+e_{3})=
\frac{e_{1}+e_{2}+e_{3}}{\lambda+\mu + \gamma}=e_{1}'\
\end{array}
$$
and $ e_{1}'e_{2}'=0,\ e_{1}'e_{3}'=0,\ e_{2}'e_{3}'=0.$\\
The determinant of the matrix of the change    (\ref{3.11}) is $D=\frac{\pm 1}{(\lambda+\mu + \gamma)\sqrt{(\lambda+\mu + \gamma)\lambda\mu\gamma}}\neq0.$

\textbf{Case 3.2.}
If
$$
  \lambda \mu(\lambda+\mu)(\lambda+\mu + \gamma)>0, \ \gamma(\lambda+\mu)<0, \ \lambda\mu>0
$$
then similarly to the above, we may take the following change of basis
\begin{equation}\label{3.12}
\begin{array}{ccc}
 e_{1}'=\frac{1}{\lambda+\mu + \gamma}e_{1}+\frac{1}{\lambda+\mu + \gamma}e_{2}+\frac{1}{\lambda+\mu + \gamma}e_{3},\\[1mm]
 e_{2}'=- \sqrt{\frac{\mu}{\lambda(\lambda+\mu)(\lambda+\mu + \gamma)}}e_{1}+\sqrt{\frac{\lambda}{\mu(\lambda+\mu)(\lambda+\mu + \gamma)}}e_{2},\\[1mm]
 e_{3}'=\sqrt{-\frac{\gamma}{\lambda+\mu}}\frac{1}{\lambda+\mu + \gamma}e_{1}+\sqrt{-\frac{\gamma}{\lambda+\mu}}\frac{1}{\lambda+\mu + \gamma}e_{2}+
  \sqrt{-\frac{\lambda+\mu}{\gamma}}\frac{1}{\lambda+\mu + \gamma}e_{3}.
\end{array}
\end{equation}
The determinant of the matrix of the change    (\ref{3.12}) is not equal to zero and for this basis it is not difficult to check that
$$ \begin{array}{c}
e_{1}'e_{1}'= e_{1}',\  e_{2}'e_{2}'=e_{1}', \ e_{3}'e_{3}'=-e_{1}' , \\[1mm]
 e_{1}'e_{2}'=0,\ e_{1}'e_{3}'=0,\ e_{2}'e_{3}'=0.
\end{array}
$$
It means  that in this case the algebra $E_{\mathcal{M}}$ is isomorphic to $E_{8}$ .

\textbf{Case 3.3.}
If
$$
 \lambda \mu(\lambda+\mu)(\lambda+\mu + \gamma)>0, \ \gamma(\lambda+\mu)<0, \  \lambda\mu<0.
$$
then we take the following change of basis
$$
\begin{array}{ccc}
 e_{1}'=\frac{1}{\lambda+\mu + \gamma}e_{1}+\frac{1}{\lambda+\mu + \gamma}e_{2}+\frac{1}{\lambda+\mu + \gamma}e_{3},\\[1mm]
 e_{2}'=\sqrt{\frac{\mu}{\lambda(\lambda+\mu)(\lambda+\mu + \gamma)}}e_{1}+\sqrt{\frac{\lambda}{\mu(\lambda+\mu)(\lambda+\mu + \gamma)}}e_{2},\\[1mm]
 e_{3}'=\sqrt{-\frac{\gamma}{\lambda+\mu}}\frac{1}{\lambda+\mu + \gamma}e_{1}+\sqrt{-\frac{\gamma}{\lambda+\mu}}\frac{1}{\lambda+\mu + \gamma}e_{2}+
  \sqrt{-\frac{\lambda+\mu}{\gamma}}\frac{1}{\lambda+\mu + \gamma}e_{3}.
\end{array}
$$
By this change of basis we can see that the algebra $E_{\mathcal{M}}$ is isomorphic to $E_{8}$ again.

\textbf{Case 3.4.}
Let
\begin{equation}\label{3.13}
  \lambda \mu(\lambda+\mu)(\lambda+\mu + \gamma)<0, \ \gamma(\lambda+\mu)>0.
\end{equation}

 Note that, if the condition (\ref{3.13}) is hold then  $\lambda\mu<0$. Then similarly to the above, in this case the algebra $E_{\mathcal{M}}$ is isomorphic to $E_{8}$ .
 For this we  take the  change of basis:
\begin{equation}\label{3.14}
\begin{array}{ccc}
 e_{1}'=\frac{1}{\lambda+\mu + \gamma}e_{1}+\frac{1}{\lambda+\mu + \gamma}e_{2}+\frac{1}{\lambda+\mu + \gamma}e_{3},\\[1mm]
 e_{2}'=\sqrt{\frac{\gamma}{\lambda+\mu}}\frac{1}{\lambda+\mu + \gamma}e_{1}+\sqrt{\frac{\gamma}{\lambda+\mu}}\frac{1}{\lambda+\mu + \gamma}e_{2}-
  \sqrt{\frac{\lambda+\mu}{\gamma}}\frac{1}{\lambda+\mu + \gamma}e_{3}, \\[1mm]
  e_{3}'=\sqrt{-\frac{\mu}{\lambda(\lambda+\mu)(\lambda+\mu + \gamma)}}e_{1}+\sqrt{-\frac{\lambda}{\mu(\lambda+\mu)(\lambda+\mu + \gamma)}}e_{2}.
\end{array}
\end{equation}
The determinant of the matrix of the change    (\ref{3.14}) is not equal to zero  and for this basis it is easy to check that
$$ \begin{array}{c}
e_{1}'e_{1}'= e_{1}',\  e_{2}'e_{2}'=e_{1}', \ e_{3}'e_{3}'=-e_{1}' , \\[1mm]
 e_{1}'e_{2}'=0,\ e_{1}'e_{3}'=0,\ e_{2}'e_{3}'=0.
\end{array}
$$

\textbf{Case 3.5.}
If
$$
  \lambda \mu(\lambda+\mu)(\lambda+\mu + \gamma)<0, \ \gamma(\lambda+\mu)<0, \ \lambda\mu>0
$$
then we take the following change of basis
\begin{equation}\label{3.15}
\begin{array}{ccc}
 e_{1}'=\frac{1}{\lambda+\mu + \gamma}e_{1}+\frac{1}{\lambda+\mu + \gamma}e_{2}+\frac{1}{\lambda+\mu + \gamma}e_{3},\\[1mm]
 e_{2}'=- \sqrt{-\frac{\mu}{\lambda(\lambda+\mu)(\lambda+\mu + \gamma)}}e_{1}+\sqrt{-\frac{\lambda}{\mu(\lambda+\mu)(\lambda+\mu + \gamma)}}e_{2},\\[1mm]
 e_{3}'=\sqrt{-\frac{\gamma}{\lambda+\mu}}\frac{1}{\lambda+\mu + \gamma}e_{1}+\sqrt{-\frac{\gamma}{\lambda+\mu}}\frac{1}{\lambda+\mu + \gamma}e_{2}+
  \sqrt{-\frac{\lambda+\mu}{\gamma}}\frac{1}{\lambda+\mu + \gamma}e_{3}.
\end{array}
\end{equation}
The determinant of the matrix of the change    (\ref{3.15}) is not equal to zero and for this basis it is not difficult to check that
$$ \begin{array}{c}
e_{1}'e_{1}'= e_{1}',\  e_{2}'e_{2}'=-e_{1}', \ e_{3}'e_{3}'=-e_{1}' , \\[1mm]
 e_{1}'e_{2}'=0,\ e_{1}'e_{3}'=0,\ e_{2}'e_{3}'=0.
\end{array}
$$
It means  that in this case the algebra $E_{\mathcal{M}}$ is isomorphic to $E_{9}$.

\textbf{Case 3.6.}
If
$$
 \lambda \mu(\lambda+\mu)(\lambda+\mu + \gamma)<0, \ \gamma(\lambda+\mu)<0, \ \lambda\mu<0
$$
then we take the following change of basis
$$
\begin{array}{ccc}
 e_{1}'=\frac{1}{\lambda+\mu + \gamma}e_{1}+\frac{1}{\lambda+\mu + \gamma}e_{2}+\frac{1}{\lambda+\mu + \gamma}e_{3},\\[1mm]
 e_{2}'= \sqrt{-\frac{\mu}{\lambda(\lambda+\mu)(\lambda+\mu + \gamma)}}e_{1}+\sqrt{-\frac{\lambda}{\mu(\lambda+\mu)(\lambda+\mu + \gamma)}}e_{2},\\[1mm]
 e_{3}'=\sqrt{-\frac{\gamma}{\lambda+\mu}}\frac{1}{\lambda+\mu + \gamma}e_{1}+\sqrt{-\frac{\gamma}{\lambda+\mu}}\frac{1}{\lambda+\mu + \gamma}e_{2}+
  \sqrt{-\frac{\lambda+\mu}{\gamma}}\frac{1}{\lambda+\mu + \gamma}e_{3}.
\end{array}
$$
By this change of basis we can see that the algebra $E_{\mathcal{M}}$ is isomorphic to $E_{9}$ again.

\textbf{Case 3.7.}
Let  $\lambda+\mu=0$.

\textbf{Case 3.7.1.}
 If $\lambda(\lambda+\gamma)>0$
then we take the following change of basis
$$
\begin{array}{ccc}
e_{1}'=\frac{1}{\gamma}e_{1} + \frac{1}{\gamma}e_{2} + \frac{1}{\gamma}e_{3},\  e_{2}'=\frac{1}{\sqrt{\lambda(\lambda+\gamma)}}e_{1}-\frac{\lambda}{\gamma}\frac{1}{\sqrt{\lambda(\lambda+\gamma)}}e_{3},\\[1mm]
e_{3}'=\frac{1}{\gamma}\sqrt{\frac{\lambda}{\lambda+\gamma}}e_{1} + \frac{\lambda+\gamma}{\lambda\gamma}\sqrt{\frac{\lambda}{\lambda+\gamma}}e_{2}+
\frac{1}{\gamma}\sqrt{\frac{\lambda}{\lambda+\gamma}}e_{3}.
\end{array}
$$
By this change of basis we can see that the algebra $E_{\mathcal{M}}$ is isomorphic to $E_{8}$.

\textbf{Case 3.7.2.}
If $\lambda(\lambda+\gamma)<0$
then we take the following change of basis
$$
\begin{array}{ccc}
e_{1}'=\frac{1}{\gamma}e_{1} + \frac{1}{\gamma}e_{2} + \frac{1}{\gamma}e_{3},\\[2mm]
e_{2}'=\frac{1}{\gamma}\sqrt{\frac{-\lambda}{\lambda+\gamma}}e_{1} + \frac{\lambda+\gamma}{\lambda\gamma}\sqrt{\frac{-\lambda}{\lambda+\gamma}}e_{2}+
\frac{1}{\gamma}\sqrt{\frac{-\lambda}{\lambda+\gamma}}e_{3},\\[2mm]
e_{3}'=\frac{1}{\sqrt{-\lambda(\lambda+\gamma)}}e_{1}-\frac{\lambda}{\gamma}\frac{1}{\sqrt{-\lambda(\lambda+\gamma)}}e_{3}.
\end{array}
$$
By this change of basis we can see that the algebra $E_{\mathcal{M}}$ is isomorphic to $E_{8}$ again.

\textbf{Case 3.7.3.}
If $\lambda+\gamma=0$ then we take the following change of basis
$$
\begin{array}{ccc}
e_{1}'=\frac{-1}{\lambda}e_{1} + \frac{-1}{\lambda}e_{2} + \frac{-1}{\lambda}e_{3},\  e_{2}'=\frac{1}{\sqrt{2}\lambda}e_{2}-\frac{1}{\sqrt{2}\lambda}e_{3},\\[1mm]
e_{3}'=\frac{\sqrt{2}}{\lambda}e_{1}+\frac{1}{\sqrt{2}\lambda}e_{2} + \frac{1}{\sqrt{2}\lambda}e_{3}.
\end{array}
$$
  In this case again the algebra $E_{\mathcal{M}}$  is isomorphic to   the algebra  $E_{8}$.

\

Now, we consider the algebra $E_{10}$. The multiplication table in $E_{10}$ is
$$
\begin{array}{c}
e_{1}'e_{1}'= 0,\  e_{2}'e_{2}'= 0, \ e_{3}'e_{3}'=e_{1}' , \\[1mm]
 e_{1}'e_{2}'=0,\ e_{1}'e_{3}'=0,\ e_{2}'e_{3}'=0.
\end{array}
$$

Assume that  $E_{\mathcal{M}}$ is isomorphic to the algebra $E_{10}$. Then there exists a change of basis as (\ref{3.3}).
In this case we have the following  system of equations similar to (\ref{3.5})
\begin{equation}\label{3.16}
\left\{\begin{array}{ll}
 \lambda x_{1}^{2}+ \mu x_{2}^{2}+\gamma x_{3}^{2}=0\\
  \lambda y_{1}^{2}+\mu y_{2}^{2}+\gamma y_{3}^{2}=0\\
 \lambda z_{1}^{2}+ \mu z_{2}^{2}+\gamma z_{3}^{2}=x_{1}=x_{2}=x_{3}\\
  \lambda x_{1}y_{1}+\mu x_{2}y_{2}+\gamma x_{3}y_{3}=0\\
  \lambda x_{1}z_{1}+\mu x_{2}z_{2}+\gamma x_{3}z_{3}=0\\
  \lambda y_{1}z_{1}+\mu y_{2}z_{2}+\gamma y_{3}z_{3}=0 \ .\
  \end{array} \right.
\end{equation}
If we use $x_{1}=x_{2}=x_{3}$ and the first equation of (\ref{3.16}) then we have the next equation
$$
x_{1}^{2}( \lambda+\mu+\gamma)=0.
$$
We know that by the condition of the lemma $ \lambda+\mu+\gamma\neq0$. Hence $x_{1}=0$. It means that $x_{1}=x_{2}=x_{3}=0$.
It is contradiction to (\ref{3.4}), hence  our assumption is not true.

In the same way, we can see that the algebra $E_{\mathcal{M}}$ is not isomorphic to the algebras $E_{11}$ and $E_{12}$.
\end{proof}

Now, using this lemma we prove the following theorem which gives classification of the  CEA:
$E_{1}^{[s,t]}$, which correspond to the $\mathcal M_{1}^{[s,t]}$.

\begin{thm} For given values $(s,t)$ of time the algebra
$E_{1}^{[s,t]}$ is isomorphic to
\

\begin {itemize}
 \item[(a)] $E_{4}$ if one of the following conditions  holds:
 \item[1)]  $  g(s)=f(s)=\frac{1}{h(s)}$;
 \item[2)]  $  g(s)=f(s)=-\frac{1}{h(s)}$;
 \item[3)]  $ g(s)=-f(s)=\frac{1}{h(s)}$;\\

 \item[(b)] $E_{5}$ if one of the following conditions  holds:
  \item[1)] $f(s)=-\frac{1}{h(s)}, \ f^{2}(s) >g^{2}(s)$;
 \item[2)]  $ g(s)=\frac{1}{h(s)}, \ g^{2}(s)>f^{2}(s) $;
 \item[3)]  $g(s)=f(s), \ \frac{1}{h^{2}(s)}>f^{2}(s)$;\\

 \item[(c)] $E_{6}$ if one of the following conditions  holds:
  \item[1)] $f(s)=-\frac{1}{h(s)}, \ f^{2}(s) <g^{2}(s)$;
 \item[2)]  $ g(s)=\frac{1}{h(s)}, \ g^{2}(s)<f^{2}(s) $;
 \item[3)]  $g(s)=f(s), \ \frac{1}{h^{2}(s)}<f^{2}(s)$;\\

 \item[(d)] $E_{7}$ if  the following conditions  hold:
 \item[] $(\frac{1}{h(s)}+f(s))(\frac{1}{h(s)}-g(s))(g(s)-f(s))\neq 0, \ (g(s)-f(s))(\frac{2}{h(s)}+f(s)-g(s))>0\\[2mm] \frac{2}{h(s)}(\frac{1}{h(s)}+f(s))(\frac{1}{h(s)}-g(s))(\frac{2}{h(s)}+f(s)-g(s))>0 $; \\

 \item[(e)] $E_{8}$ if one of the following conditions  holds:
 \item[1)] $(\frac{1}{h(s)}+f(s))(\frac{1}{h(s)}-g(s))(g(s)-f(s))\neq 0, \ (g(s)-f(s))(\frac{2}{h(s)}+f(s)-g(s))<0\\[2mm] \frac{2}{h(s)}(\frac{1}{h(s)}+f(s))(\frac{1}{h(s)}-g(s))(\frac{2}{h(s)}+f(s)-g(s))>0 $; \\

 \item[2)] $(\frac{1}{h(s)}+f(s))(\frac{1}{h(s)}-g(s))(g(s)-f(s))\neq 0, \ (g(s)-f(s))(\frac{2}{h(s)}+f(s)-g(s))>0\\[2mm] \frac{2}{h(s)}(\frac{1}{h(s)}+f(s))(\frac{1}{h(s)}-g(s))(\frac{2}{h(s)}+f(s)-g(s))<0 $; \\

 \item[3)] $(\frac{1}{h(s)}+f(s))(\frac{1}{h(s)}-g(s))(g(s)-f(s))\neq 0, \ \frac{2}{h(s)}+f(s)-g(s)=0;$\\

 \item[(f)] $E_{9}$ if  the following conditions  hold:
 \item[] $(\frac{1}{h(s)}+f(s))(\frac{1}{h(s)}-g(s))(g(s)-f(s))\neq 0, \ (g(s)-f(s))(\frac{2}{h(s)}+f(s)-g(s))<0\\[2mm] \frac{2}{h(s)}(\frac{1}{h(s)}+f(s))(\frac{1}{h(s)}-g(s))(\frac{2}{h(s)}+f(s)-g(s))<0 $.

 \end {itemize}
\end{thm}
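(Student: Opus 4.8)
The plan is to apply the preceding Lemma directly with $\mathcal{M}=\mathcal{M}_1^{[s,t]}$. Reading off the (constant) rows, the algebra $E_1^{[s,t]}$ is exactly the algebra $E_{\mathcal{M}}$ of the Lemma for the parameters
$$\lambda=\frac{h(t)}{2}\Big(\frac{1}{h(s)}+f(s)\Big),\qquad \mu=\frac{h(t)}{2}\Big(\frac{1}{h(s)}-g(s)\Big),\qquad \gamma=\frac{h(t)}{2}\big(g(s)-f(s)\big).$$
First I would verify the hypothesis of the Lemma: summing the three entries telescopes to $\lambda+\mu+\gamma=\frac{h(t)}{2}\cdot\frac{2}{h(s)}=\frac{h(t)}{h(s)}$, which is nonzero since $h(s)\neq 0$ and $h(t)\neq 0$. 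Hence the Lemma applies verbatim, and the entire remaining task is to rewrite each of its six families of conditions in terms of $h$, $f$, $g$.

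Write $c:=h(t)/2\neq 0$ for brevity. Because $c$ is a nonzero common factor of all three parameters, the vanishing conditions translate cleanly: $\lambda=0\Leftrightarrow f(s)=-\tfrac{1}{h(s)}$, $\mu=0\Leftrightarrow g(s)=\tfrac{1}{h(s)}$, and $\gamma=0\Leftrightarrow g(s)=f(s)$. Substituting these pairwise into the Lemma's case (a) reproduces conditions (a)1)--3). For the sign conditions in (b) and (c), I would use that each pairwise product carries the positive factor $c^2$; for instance, under $\lambda=0$ (so $f(s)=-\tfrac{1}{h(s)}$ and $f^2(s)=\tfrac{1}{h^2(s)}$),
$$\mu\gamma=c^2\Big(\frac{1}{h(s)}-g(s)\Big)\Big(\frac{1}{h(s)}+g(s)\Big)=c^2\Big(\frac{1}{h^2(s)}-g^2(s)\Big),$$
whose sign is that of $f^2(s)-g^2(s)$. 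The analogous computations for the choices $\mu=0$ and $\gamma=0$ yield (b)1)--3) and (c)1)--3).

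For the three-nonzero cases (d)--(f) I would first record the two auxiliary quantities
$$\lambda+\mu=c\Big(\frac{2}{h(s)}+f(s)-g(s)\Big),\qquad \lambda+\mu+\gamma=\frac{h(t)}{h(s)}=\frac{2c}{h(s)}.$$
Then $\lambda\mu\gamma\neq 0$ is the nonvanishing of $\big(\tfrac{1}{h(s)}+f(s)\big)\big(\tfrac{1}{h(s)}-g(s)\big)(g(s)-f(s))$; the condition $\gamma(\lambda+\mu)\gtrless 0$ becomes $(g(s)-f(s))\big(\tfrac{2}{h(s)}+f(s)-g(s)\big)\gtrless 0$ since $\gamma(\lambda+\mu)=c^2(g(s)-f(s))\big(\tfrac{2}{h(s)}+f(s)-g(s)\big)$; and $\lambda+\mu=0$ becomes $\tfrac{2}{h(s)}+f(s)-g(s)=0$. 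The one step requiring slight care is the four-factor quantity
$$\lambda\mu(\lambda+\mu)(\lambda+\mu+\gamma)=\frac{2c^4}{h(s)}\Big(\frac{1}{h(s)}+f(s)\Big)\Big(\frac{1}{h(s)}-g(s)\Big)\Big(\frac{2}{h(s)}+f(s)-g(s)\Big),$$
whose sign, after cancelling the positive constant $2c^4$, is exactly that of $\frac{2}{h(s)}\big(\tfrac{1}{h(s)}+f(s)\big)\big(\tfrac{1}{h(s)}-g(s)\big)\big(\tfrac{2}{h(s)}+f(s)-g(s)\big)$, as stated in the theorem. Feeding these translated quantities into the Lemma's cases (d), (e)1)--3), and (f) gives precisely the claimed conditions, completing the proof. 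The only obstacle is this sign bookkeeping --- in particular retaining the factor $\tfrac{1}{h(s)}$ that comes from $\lambda+\mu+\gamma$ inside the four-factor product --- rather than any conceptual difficulty, since the isomorphisms themselves are already supplied by the Lemma.
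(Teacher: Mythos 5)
Your proposal is correct and follows essentially the same route as the paper: identify $\lambda,\mu,\gamma$ as the common row entries of $\mathcal M_1^{[s,t]}$, check $\lambda+\mu+\gamma=h(t)/h(s)\neq 0$ so that Lemma~1 applies, and translate each of its sign conditions, with the same key computation $\lambda\mu(\lambda+\mu)(\lambda+\mu+\gamma)=\frac{h^4(t)}{8h(s)}\bigl(\frac{1}{h(s)}+f(s)\bigr)\bigl(\frac{1}{h(s)}-g(s)\bigr)\bigl(\frac{2}{h(s)}+f(s)-g(s)\bigr)$ that the paper records. The sign bookkeeping you carry out matches the paper's case-by-case verification.
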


\
\begin{proof}
We mention that in  $\mathcal M_{1}^{[s,t]}$  the parameter functions $h,\ g$ and $f$ are arbitrary with $h(s)\neq 0$.

 Let us  denote
$$\lambda(s,t)=\frac{1}{2}h(t)(\frac{1}{h(s)}+f(s)), \ \mu(s,t)=\frac{1}{2}h(t)(\frac{1}{h(s)}-g(s)), \ \gamma(s,t)=\frac{1}{2}h(t)(g(s)-f(s)).$$
 Note that
$\lambda(s,t)+\mu(s,t)+\gamma(s,t)=\frac{h(t)}{h(s)}\neq 0$.\\

a) If $  g(s)=f(s)=\frac{1}{h(s)}$ then $\mu(s,t)=\gamma(s,t)=0$  and
$\lambda(s,t)=\frac{h(t)}{h(s)}\neq 0$. By Lemma 1, it is easy to see that $E_{1}^{[s,t]}$ is isomorphic to $E_{4}$.\\

The cases $  g(s)=f(s)=-\frac{1}{h(s)}$ and   $ g(s)=-f(s)=\frac{1}{h(s)}$  are similarly.\\

b) Let us $f(s)=-\frac{1}{h(s)}, \ f^{2}(s) >g^{2}(s)$. It means that $\lambda(s,t)=0$ and
$\mu(s,t)\gamma(s,t)>0$. Indeed
$$\lambda(s,t)=\frac{1}{2}h(t)(\frac{1}{h(s)}+f(s))=\frac{1}{2}h(t)(\frac{1}{h(s)}-\frac{1}{h(s)})=0$$ and
$$\begin{array}{cc}
\mu(s,t)\gamma(s,t)=\frac{1}{4}
h^{2}(t)(\frac{1}{h(s)}-g(s))(g(s)-f(s))=\\[2mm]
=\frac{1}{4}h^{2}(t)(-f(s)-g(s))(g(s)-f(s))
=\frac{1}{4}h^{2}(t)(f^{2}(s)-g^{2}(s))>0 \ .
\end{array}
$$
In this case by  Lemma 1,  $E_{1}^{[s,t]}$ is isomorphic to $E_{5}$.

If the conditions $ g(s)=\frac{1}{h(s)}, \ g^{2}(s)>f^{2}(s)$ and $ g(s)=f(s), \ \frac{1}{h^{2}(s)}>f^{2}(s)$ are hold
then $\mu(s,t)=0, \ \lambda(s,t)\gamma(s,t)>0$ and $\gamma(s,t)=0, \ \lambda(s,t)\mu(s,t)>0$ respectively.
In these cases by Lemma 1,  $E_{1}^{[s,t]}$ is isomorphic to $E_{5}$ again.\\

c) Similar to the previous case, if $f(s)=-\frac{1}{h(s)}, \ f^{2}(s) <g^{2}(s)$ then $\lambda(s,t)=0$ and $\mu(s,t)\gamma(s,t)<0$.
If the conditions $ g(s)=\frac{1}{h(s)}, \ g^{2}(s)<f^{2}(s)$ and $ g(s)=f(s), \ \frac{1}{h^{2}(s)}<f^{2}(s)$ are hold
then $\mu(s,t)=0, \ \lambda(s,t)\gamma(s,t)<0$ and $\gamma(s,t)=0, \ \lambda(s,t)\mu(s,t)<0$ respectively.
In these cases   $E_{1}^{[s,t]}$ is isomorphic to $E_{6}$.\\

d) If
$$\begin{array}{cc}
(\frac{1}{h(s)}+f(s))(\frac{1}{h(s)}-g(s))(g(s)-f(s))\neq 0, \ (g(s)-f(s))(\frac{2}{h(s)}+f(s)-g(s))>0\\[2mm] \frac{2}{h(s)}(\frac{1}{h(s)}+f(s))(\frac{1}{h(s)}-g(s))(\frac{2}{h(s)}+f(s)-g(s))>0
\end{array}$$
 then
 $$\lambda(s,t)\mu(s,t)\gamma(s,t)=h^{3}(t)(\frac{1}{h(s)}+f(s))(\frac{1}{h(s)}-g(s))(g(s)-f(s))\neq 0 $$
 and
 $$  \gamma(s,t)(\lambda(s,t)+\mu(s,t))= \frac{1}{4}h^{2}(t)(g(s)-f(s))(\frac{2}{h(s)}+f(s)-g(s))>0,$$
  $$\begin{array}{cc}
  \lambda(s,t) \mu(s,t)(\lambda(s,t)+\mu(s,t))(\lambda(s,t)+\mu(s,t) + \gamma(s,t))=\\[2mm]
  =\frac{1}{8}\frac{h^{4}(t)}{h(s)}(\frac{1}{h(s)}+f(s))(\frac{1}{h(s)}-g(s))(\frac{2}{h(s)}+f(s)-g(s))>0.
  \end{array} $$
In this case by Lemma 1,  $E_{1}^{[s,t]}$ is isomorphic to $E_{7}$.\\

e) In the same way it is not difficult to see that
if one of the following conditions is hold
 \begin {itemize}
\item[(i)] $(\frac{1}{h(s)}+f(s))(\frac{1}{h(s)}-g(s))(g(s)-f(s))\neq 0, \ (g(s)-f(s))(\frac{2}{h(s)}+f(s)-g(s))<0\\[2mm] \frac{2}{h(s)}(\frac{1}{h(s)}+f(s))(\frac{1}{h(s)}-g(s))(\frac{2}{h(s)}+f(s)-g(s))>0 $; \\

 \item[(ii)] $(\frac{1}{h(s)}+f(s))(\frac{1}{h(s)}-g(s))(g(s)-f(s))\neq 0, \ (g(s)-f(s))(\frac{2}{h(s)}+f(s)-g(s))>0\\[2mm] \frac{2}{h(s)}(\frac{1}{h(s)}+f(s))(\frac{1}{h(s)}-g(s))(\frac{2}{h(s)}+f(s)-g(s))<0 $; \\

 \item[(iii)] $(\frac{1}{h(s)}+f(s))(\frac{1}{h(s)}-g(s))(g(s)-f(s))\neq 0, \ \frac{2}{h(s)}+f(s)-g(s)=0$
\end {itemize}
 then   $E_{1}^{[s,t]}$ is isomorphic to $E_{8}$.\\

f) If
$$\begin{array}{cc}
(\frac{1}{h(s)}+f(s))(\frac{1}{h(s)}-g(s))(g(s)-f(s))\neq 0, \ (g(s)-f(s))(\frac{2}{h(s)}+f(s)-g(s))<0\\[2mm] \frac{1}{h(s)}(\frac{1}{h(s)}+f(s))(\frac{1}{h(s)}-g(s))(\frac{2}{h(s)}+f(s)-g(s))<0
\end{array}$$
then   $E_{1}^{[s,t]}$ is isomorphic to $E_{9}$.
\end{proof}
\begin{rk}
Note that by Lemma 1, $E_{1}^{[s,t]}$ is not isomorphic to the algebras
 $ E_{1},$ $  E_{2},$ $  E_{3},$ $  E_{10},$ $ E_{11},$ $ E_{12}$
  for any $0\leq s\leq t$.
\end{rk}

To illustrate the essence of Theorem 1, consider the following example.

\begin{ex}
Let $ g(s)=4s-16, \ f(s)=4s^{2}-24s+32, \  \frac{1}{h(s)}=s+1 $ then we obtain:
\begin {itemize}
 \item[(a)]   $E_{4}$ when $s=3$;
 \item[(b)]   $E_{5}$ when $s=4$;
 \item[(c)]   $E_{6}$ when $s=\frac{11}{4}$ and $s=\frac{17}{3}$;
 \item[(d)]   $E_{7}$ when $s\in(3;4)$;
 \item[(e)]   $E_{8}$ when $s\in[0;\frac{11}{4})\cup (4;\frac{17}{3})$;
 \item[(f)]   $E_{9}$ when $s\in(\frac{11}{4};3)\cup (\frac{17}{3};\infty)$;
\end {itemize}
 where $0\leq s \leq t $ (see Fig. \ref{Fig.1}).
\end{ex}
\begin{figure}[h!]
\includegraphics[width=0.8\textwidth]{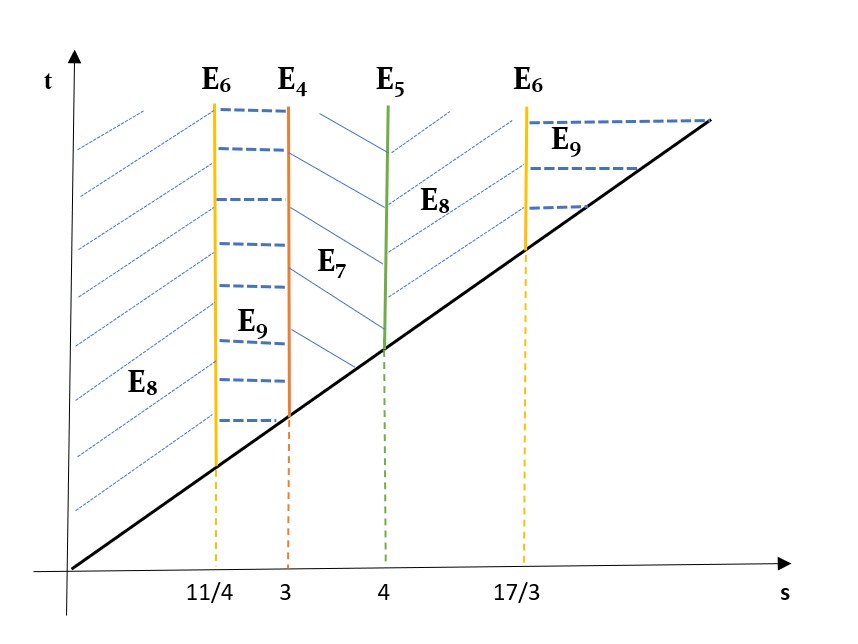}\\
\caption{The partition of the set $\{(s,t): 0\leq s\leq t\}$ corresponding to classification of EAs in the CEA $E_{1}^{[s,t]}$.}\label{Fig.1}
\end{figure}

\subsection{The case $ E_{2}^{[s,t]}$.}

In this case by Lemma 1 we get
\begin{lemma}
The real evolution algebra corresponding to the matrix

$\left(\begin{array}{cccccc}
1+a & 1+a & 1+a\\
1-b & 1-b & 1-b\\
b-a & b-a & b-a\end{array}\right)$  is isomorphic  to one of the following algebras:

\begin {itemize}
 \item[(a)] $E_{4}$ if one of the following conditions is hold:
 \item[1)]  $  a=b=1$;
 \item[2)]  $  a=b=-1$;
 \item[3)]  $ a=-1, \ b=1;$\\

 \item[(b)] $E_{5}$ if one of the following conditions is hold:
 \item[1)]  $ a=-1, \ |b|<1 $;
 \item[2)]  $ b=1, \ |a|<1 $;
 \item[3)]  $ a=b, \ |a|<1 $;\\
 \item[(c)] $E_{6}$ if one of the following conditions is hold:
  \item[1)] $ a=-1, \ |b|>1  $;
 \item[2)]  $ b=1, \ |a|>1 $;
 \item[3)]  $ a=b, \ |a|>1  $;\\

 \item[(d)] $E_{7}$  if  the following condition is hold:
 \item[]    $ (1+a)(1-b)(b-a)\neq 0, \ (1+a)(1-b)(2+a-b)>0, \ (b-a)(2+a-b)>0 $;\\

 \item[(e)] $E_{8}$ if one of the following conditions is hold:
 \item[1)] $ (1+a)(1-b)(b-a)\neq 0, \ (1+a)(1-b)(2+a-b)>0, \ (b-a)(2+a-b)<0 $;
 \item[2)] $ (1+a)(1-b)(b-a)\neq 0, \ (1+a)(1-b)(2+a-b)<0, \ (b-a)(2+a-b)>0 $;
 \item[3)] $ (1+a)(1-b)(b-a)\neq 0, \ 2+a-b=0 $;\\

 \item[(f)] $E_{9}$ if  the following conditions is hold:
 \item[ ] $ (1+a)(1-b)(b-a)\neq 0, \ (1+a)(1-b)(2+a-b)<0, \ (b-a)(2+a-b)<0 $.
 \end {itemize}
\end{lemma}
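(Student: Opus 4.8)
The plan is to obtain Lemma 2 as a direct specialization of Lemma 1, so that essentially no new work is required beyond a change of parameters. Observe that the matrix in the statement is precisely the matrix $\mathcal{M}$ of Lemma 1 under the substitution
$$\lambda = 1+a, \qquad \mu = 1-b, \qquad \gamma = b-a.$$
First I would verify that the single hypothesis of Lemma 1 holds automatically: here
$$\lambda + \mu + \gamma = (1+a) + (1-b) + (b-a) = 2 \neq 0,$$
so Lemma 1 applies for every real pair $(a,b)$ with no extra restriction. Thus the whole proof reduces to rewriting the six families of conditions (a)--(f) of Lemma 1 in terms of $a$ and $b$.

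Next I would translate the degenerate cases (a)--(c), where one or two of $\lambda,\mu,\gamma$ vanish. In (a), the requirement $\mu=\gamma=0$ forces $b=1$ and then $a=b=1$; $\lambda=\gamma=0$ forces $a=-1$ and then $b=a=-1$; and $\lambda=\mu=0$ forces $a=-1,\ b=1$, which gives the three listed possibilities for $E_4$. For (b) and (c) the only substantive step is evaluating the relevant product after factorization: when $\lambda=0$ (that is $a=-1$) one has $\mu\gamma = (1-b)(b+1) = 1-b^2$, so $\mu\gamma>0 \iff |b|<1$ and $\mu\gamma<0 \iff |b|>1$; when $\mu=0$ (that is $b=1$) one has $\lambda\gamma = (1+a)(1-a) = 1-a^2$; and when $\gamma=0$ (that is $a=b$) one has $\lambda\mu = (1+a)(1-a) = 1-a^2$. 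These reproduce exactly the conditions $|a|<1$, $|a|>1$, $|b|<1$, $|b|>1$ appearing under $E_5$ and $E_6$.

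Finally I would treat the generic cases (d)--(f), where $\lambda\mu\gamma\neq 0$. The key identities are
$$\lambda+\mu = 2+a-b, \qquad \lambda+\mu+\gamma = 2,$$
so $\gamma(\lambda+\mu)$ becomes $(b-a)(2+a-b)$, while $\lambda\mu(\lambda+\mu)(\lambda+\mu+\gamma)$ becomes $2(1+a)(1-b)(2+a-b)$, whose sign agrees with that of $(1+a)(1-b)(2+a-b)$. Substituting these into the sign conditions of Lemma 1 reproduces verbatim the conditions stated for $E_7$, $E_8$, $E_9$; in particular the subcase $\lambda+\mu=0$ of (e) becomes $2+a-b=0$. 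Since every step is elementary sign bookkeeping, there is no real obstacle here---the only care required is tracking the factorizations $1-b^2$ and $1-a^2$ correctly so that the strict inequalities match the absolute-value conditions.
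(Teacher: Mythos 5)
Your proposal is correct and coincides with the paper's own (implicit) argument: the paper likewise obtains this lemma directly from Lemma 1 via the substitution $\lambda=1+a$, $\mu=1-b$, $\gamma=b-a$, noting $\lambda+\mu+\gamma=2\neq0$. Your sign bookkeeping (in particular $\mu\gamma=1-b^2$, $\lambda\gamma=\lambda\mu=1-a^2$, $\lambda+\mu=2+a-b$) checks out and reproduces all six families of conditions exactly.
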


Now, by this lemma we obtain the following theorem which gives classification of the  CEA:  $E_{2}^{[s,t]}$. Let's denote by $E_{0}$ the trivial evolution algebra (i.e. with zero multiplication).
\begin{thm}
For given values $(s,t)\in \{ (s,t):0\leq s\leq t <a \}$ of time  the algebra $E_{2}^{[s,t]}$ is isomorphic to
\

\begin {itemize}
 \item[(a)] $E_{4}$ if one of the following conditions is hold:
 \item[1)]  $  \psi(s)=\varphi(s)=1$;
 \item[2)]  $  \psi(s)=\varphi(s)=-1$;
 \item[3)]  $ \psi(s)=-1, \ \varphi(s)=1;$\\

 \item[(b)] $E_{5}$ if one of the following conditions is hold:
 \item[1)]  $ \psi(s)=-1, \ |\varphi(s)|<1 $;
 \item[2)]  $ \varphi(s)=1, \ |\psi(s)|<1 $;
 \item[3)]  $ \psi(s)=\varphi(s), \ |\psi(s)|<1 $;\\

 \item[(c)] $E_{6}$ if one of the following conditions is hold:
  \item[1)] $ \psi(s)=-1, \ |\varphi(s)|>1  $;
 \item[2)]  $ \varphi(s)=1, \ |\psi(s)|>1 $;
 \item[3)]  $ \psi(s)=\varphi(s), \ |\psi(s)|>1  $;\\

 \item[(d)] $E_{7}$  if  the following condition is hold:
 \item[]    $ (1+\psi(s))(1-\varphi(s))(\varphi(s)-\psi(s))\neq 0, \\[1mm]
  (1+\psi(s))(1-\varphi(s))(2+\psi(s)-\varphi(s))>0, \ (\varphi(s)-\psi(s))(2+\psi(s)-\varphi(s))>0 $;\\

 \item[(e)] $E_{8}$ if one of the following conditions is hold:
 \item[1)]$ (1+\psi(s))(1-\varphi(s))(\varphi(s)-\psi(s))\neq 0, \\[1mm]
  (1+\psi(s))(1-\varphi(s))(2+\psi(s)-\varphi(s))>0, \ (\varphi(s)-\psi(s))(2+\psi(s)-\varphi(s))<0 $;
 \item[2)] $ (1+\psi(s))(1-\varphi(s))(\varphi(s)-\psi(s))\neq 0, \\[1mm]
  (1+\psi(s))(1-\varphi(s))(2+\psi(s)-\varphi(s))<0, \ (\varphi(s)-\psi(s))(2+\psi(s)-\varphi(s))>0 $;
 \item[3)]  $ (1+\psi(s))(1-\varphi(s))(\varphi(s)-\psi(s))\neq 0, \ 2+\psi(s)-\varphi(s)=0 $;\\

 \item[(f)] $E_{9}$ if  the following condition is hold:
 \item[1)] $ (1+\psi(s))(1-\varphi(s))(\varphi(s)-\psi(s))\neq 0, \\[1mm]
  (1+\psi(s))(1-\varphi(s))(2+\psi(s)-\varphi(s))<0, \ (\varphi(s)-\psi(s))(2+\psi(s)-\varphi(s))<0 $.
 \end {itemize}
 For all $(s,t)\in \{ (s,t):t\geq a\}$ the algebra $E_{2}^{[s,t]}$ is isomorphic to $E_{0}$.
\end{thm}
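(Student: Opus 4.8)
The plan is to reduce the statement to Lemma 2 by the substitution $a=\psi(s)$, $b=\varphi(s)$, treating the two pieces of the piecewise definition of $\mathcal M_2^{[s,t]}$ separately. The region $\{(s,t): t\geq a\}$ is immediate: there $\mathcal M_2^{[s,t]}$ is the zero matrix, so every product $e_ie_i$ vanishes and $E_2^{[s,t]}$ is, by definition, the trivial evolution algebra $E_0$. No computation is needed here.

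For the region $\{(s,t): 0\leq s\leq t<a\}$ I would start from the observation that
\[\mathcal M_2^{[s,t]}=\frac{1}{2}\begin{pmatrix} 1+\psi(s) & 1+\psi(s) & 1+\psi(s)\\ 1-\varphi(s) & 1-\varphi(s) & 1-\varphi(s)\\ \varphi(s)-\psi(s) & \varphi(s)-\psi(s) & \varphi(s)-\psi(s)\end{pmatrix}\]
is precisely $\tfrac{1}{2}$ times the matrix classified in Lemma 2 with $a,b$ replaced by $\psi(s),\varphi(s)$. The single point that needs care is this overall factor $\tfrac{1}{2}$, and I would dispose of it by noting that a scalar multiple of the structure matrix does not change the isomorphism type: rescaling the basis by $e_i\mapsto c\,e_i$ with $c\neq 0$ replaces each structural constant $a_{ij}$ by $c\,a_{ij}$, so the evolution algebra defined by $\mathcal M$ is isomorphic to the one defined by $c\mathcal M$. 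Hence the algebra of $\mathcal M_2^{[s,t]}$ is isomorphic to that of $2\mathcal M_2^{[s,t]}$, to which Lemma 2 applies directly.

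Granting this, I would invoke Lemma 2 with $a=\psi(s)$, $b=\varphi(s)$ and transcribe its six alternatives. Writing $\lambda=1+a$, $\mu=1-b$, $\gamma=b-a$ one has $\lambda+\mu+\gamma=2\neq0$, so the hypothesis of Lemma 1 (hence of Lemma 2) holds automatically, and the translations are verbatim: $\lambda+\mu=2+\psi(s)-\varphi(s)$, $\lambda\mu=(1+\psi(s))(1-\varphi(s))$, $\gamma=\varphi(s)-\psi(s)$, and $\lambda\mu\gamma=(1+\psi(s))(1-\varphi(s))(\varphi(s)-\psi(s))$. Substituting these into the defining inequalities of Lemma 2 reproduces, term by term, the conditions (a)--(f) listed in the theorem, which settles the nonzero region.

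I do not anticipate a genuine obstacle: the argument is essentially bookkeeping, and the only substantive remark is the scale-invariance used to remove the factor $\tfrac{1}{2}$. As a closing observation I would add, in the spirit of Remark 1, that $\lambda+\mu+\gamma=2\neq0$ together with Lemma 1 rules out any isomorphism of $E_2^{[s,t]}$ (for $t<a$) with the base algebras $E_1,E_2,E_3,E_{10},E_{11},E_{12}$, which explains why these do not occur in the list.
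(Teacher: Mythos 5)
Your proposal is correct and follows essentially the same route as the paper, which proves Theorem 2 by specializing Lemma 2 (itself an instance of Lemma 1 with $\lambda+\mu+\gamma=2\neq0$) to $a=\psi(s)$, $b=\varphi(s)$ and treating the region $t\geq a$ trivially. Your explicit handling of the overall factor $\tfrac12$ via the basis rescaling $e_i\mapsto c\,e_i$ is a small point of added care that the paper leaves implicit, but it does not change the argument.
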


\begin{proof}
The proof of the theorem is similar to that of Theorem 1.
\end{proof}
\begin{rk}
By Lemma 2, $E_{2}^{[s,t]}$ is not isomorphic to the algebras
 $ E_{1},$ $  E_{2},$ $  E_{3},$ $  E_{10},$ $ E_{11},$ $ E_{12}$
  for any $0\leq s\leq t$.
\end{rk}
To illustrate the essence of Theorem 2, consider the following example.
\begin{ex}
Let $ \varphi(s)=s^{2}-8s+13 , \ \psi(s)=s^{2}-5 $ then for given

$(s,t)\in \{ (s,t):0\leq s\leq t <a \}$
we obtain:
\begin {itemize}
 \item[(a)]   $E_{4}$ when $s=2$;
 \item[(b)]   $E_{5}$ when $s=\frac{9}{4}$;
 \item[(c)]   $E_{6}$ when $s=6$;
 \item[(d)]   $E_{7}$ when $s\in(2;\frac{9}{4})$;
 \item[(e)]   $E_{8}$ when $s\in(\frac{9}{4};6)$;
 \item[(f)]   $E_{9}$ when $s\in[0;2)\cup (6;\infty)$;
\end {itemize}
and for all $(s,t)\in \{ (s,t):t\geq a\}$ we obtain $E_{0}$ (see Fig. \ref{Fig.2}).

\end{ex}

\begin{figure}[h!]
\includegraphics[width=0.8\textwidth]{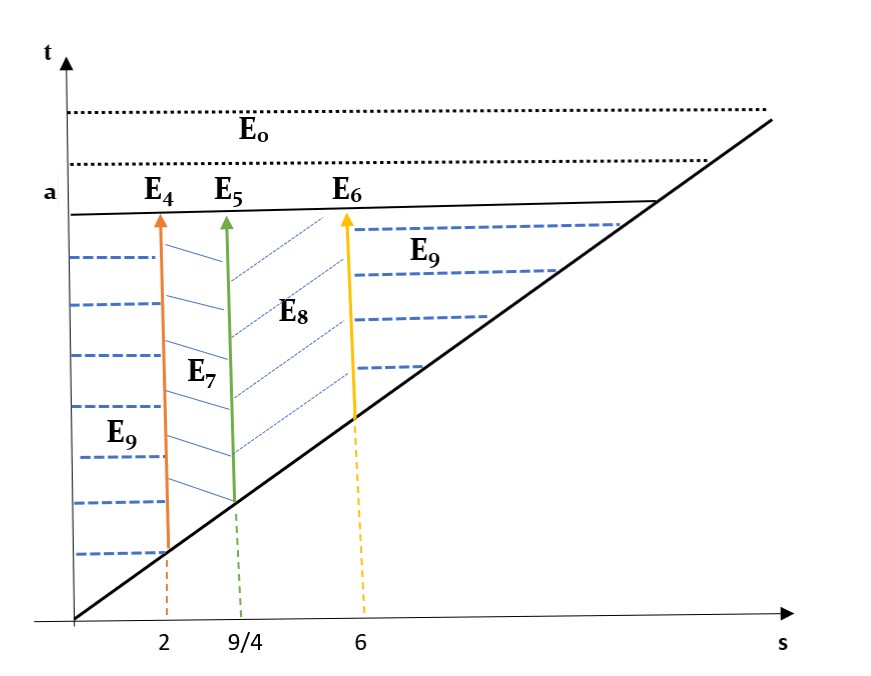}\\
\caption{The partition of time set corresponding to the classification of the CEA $E_{2}^{[s,t]}$}\label{Fig.2}
\end{figure}

Since the two different chains we considered above contain EAs isomorphic to the same algebras.
But they do not include all twelve algebras $E_i$ $i=1,2,\dots, 12$. Below we show  that $E_{3}^{[s,t]}$ is reach enough,
one can choose its parameter functions such that the chain includes all the algebras.

\subsection{The case $ E_{3}^{[s,t]}$.}

\begin{lemma}
 For the real evolution algebra corresponding to the matrix

$\left(\begin{array}{cccccc}
\alpha & \beta & \gamma\\
\lambda\alpha & \lambda\beta & \lambda\gamma\\
\mu\alpha & \mu\beta & \mu\gamma\end{array}\right)$ we have\\
I. If $\alpha\neq 0, \ \alpha^{2}+ \lambda\beta^{2}+\mu\gamma^{2}=0$ then it is isomorphic  to one of the following algebras:
 \
\begin {itemize}
 \item[(a)] $E_{1}$ if one of the following conditions is hold:
 \item[1)] $ \mu=0, \ \lambda\beta\gamma\neq 0$;
 \item[2)] $ \lambda=0, \ \beta\gamma\mu\neq 0$;
 \item[3)] $\gamma=\mu=0, \ \lambda\beta\neq 0$;
 \item[4)] $\lambda=\beta=0, \ \gamma\mu\neq 0$;\\

 \item[(b)] $E_{2}$ if one of  the following conditions is hold:
  \item[1)] $\gamma=0, \ \lambda\beta\mu\neq 0, \ \mu>0 $;
 \item[2)] $\beta=0, \ \lambda\gamma\mu\neq 0, \ \lambda>0 $;
 \item[3)] $ \lambda\beta\gamma\mu\neq 0$;\\

 \item[(c)] $E_{3}$ if one of  the following conditions is hold:
  \item[1)] $\gamma=0, \ \lambda\beta\mu\neq 0, \ \mu<0 $;
 \item[2)] $\beta=0, \  \lambda\gamma\mu\neq 0, \ \lambda<0 $.\\

  \end {itemize}

 II. If
  $\alpha\neq 0, \ \alpha^{2}+ \lambda\beta^{2}+\mu\gamma^{2}\neq 0$ then it is isomorphic  to one of the following algebras:
 \
\begin {itemize}
 \item[(d)] $E_{4}$ if  the following condition is hold:
 \item[] $ \lambda=0, \ \mu=0 $;\\

 \item[(e)] $E_{5}$ if one of  the following conditions is hold:
  \item[1)] $ \lambda=0, \ \mu>0 $;
 \item[2)] $ \mu=0,  \ \lambda>0 $;\\

 \item[(f)] $E_{6}$ if one of  the following conditions is hold:
  \item[1)] $ \lambda=0, \ \mu<0 $;
 \item[2)] $ \mu=0, \ \lambda<0 $;\\

 \item[(g)] $E_{7}$ if  the following condition is hold:
 \item[] $ \lambda>0, \ \mu>0 $;\\

 \item[(h)] $E_{8}$ if one of  the following conditions is hold:
 \item[1)] $ \lambda>0, \ \mu<0, \  \alpha^{2}+ \lambda\beta^{2}+\mu\gamma^{2}>0 $;
 \item[2)] $ \lambda<0, \ \mu>0, \  \alpha^{2}+ \lambda\beta^{2}<0, \  \alpha^{2}+ \lambda\beta^{2}+\mu\gamma^{2}>0 $;
 \item[3)] $ \lambda<0, \ \mu>0, \  \alpha^{2}+ \lambda\beta^{2}>0 $;
 \item[4)] $ \lambda<0, \ \mu<0, \  \alpha^{2}+ \lambda\beta^{2}>0, \  \alpha^{2}+ \lambda\beta^{2}+\mu\gamma^{2}<0 $;
 \item[5)] $ \lambda<0, \ \mu<0, \  \alpha^{2}+ \lambda\beta^{2}<0 $;
 \item[6)] $ \lambda\neq 0, \ \mu\neq 0, \  \alpha^{2}+ \lambda\beta^{2}=0 $;\\

 \item[(i)] $E_{9}$ if one of  the following conditions is hold:
 \item[1)] $ \lambda>0, \ \mu<0, \  \alpha^{2}+ \lambda\beta^{2}+\mu\gamma^{2}<0 $;
 \item[2)] $ \lambda<0, \ \mu>0, \  \alpha^{2}+ \lambda\beta^{2}<0, \  \alpha^{2}+ \lambda\beta^{2}+\mu\gamma^{2}<0 $;
 \item[3)] $ \lambda<0, \ \mu<0, \  \alpha^{2}+ \lambda\beta^{2}>0, \  \alpha^{2}+ \lambda\beta^{2}+\mu\gamma^{2}>0 $.\\
  \end {itemize}
III. If
 $\alpha= 0, \ \beta\neq 0 $ then it is isomorphic  to one of the following algebras:
 \
\begin {itemize}
\item[(j)] $E_{2}$ if  the following condition is hold:
 \item[] $ \mu\neq 0, \ \gamma\neq 0, \ \lambda>0, \  \lambda\beta^{2}+\mu\gamma^{2}=0 $;\\

\item[(k)] $E_{3}$ if  the following condition is hold:
 \item[] $ \mu\neq 0, \ \gamma\neq 0, \ \lambda<0, \  \lambda\beta^{2}+\mu\gamma^{2}=0 $;\\

 \item[(l)] $E_{5}$ if one of the following conditions is hold:
 \item[1)] $ \mu=0, \  \lambda>0 $;
 \item[2)] $  \lambda=0, \ \gamma\neq 0, \ \mu>0 $;\\

 \item[(m)] $E_{6}$ if one of the following conditions is hold:
 \item[1)] $ \mu=0,  \ \lambda<0 $;
 \item[2)] $ \lambda=0, \ \gamma\neq 0, \ \mu<0 $;\\

\item[(n)] $E_{7}$ if  the following condition is hold:
 \item[ ] $   \lambda>0, \ \mu>0 $;\\

\item[(o)] $E_{8}$ if one of the following conditions is hold:
 \item[1)] $  \lambda<0, \ \mu<0 $;
 \item[2)] $ \lambda\beta^{2}+\mu\gamma^{2}> 0, \  \lambda\mu<0 $;\\

\item[(p)] $E_{9}$ if  the following condition is hold:
 \item[ ] $ \lambda\beta^{2}+\mu\gamma^{2}< 0, \  \lambda\mu<0 $;\\

 \item[(q)] $E_{10}$ if  the following condition is hold:
 \item[] $ \lambda=0, \ \mu=0 $;\\

\item[(r)] $E_{11}$ if  the following condition is hold:
 \item[] $ \lambda=0, \ \gamma=0, \ \mu>0 $;\\

\item[(s)] $E_{12}$ if  the following condition is hold:
 \item[] $ \lambda=0, \ \gamma=0, \ \mu<0 $;\\

 \end {itemize}
\end{lemma}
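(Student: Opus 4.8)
The plan is to reduce the whole classification to the single structural observation that, in the algebra $E_{\mathcal{M}}$ with basis $\{e_{1},e_{2},e_{3}\}$ attached to the given matrix, one has $e_{2}e_{2}=\lambda\, e_{1}e_{1}$ and $e_{3}e_{3}=\mu\, e_{1}e_{1}$, while $e_{i}e_{j}=0$ for $i\ne j$. Writing $w=\alpha e_{1}+\beta e_{2}+\gamma e_{3}$ and $B(u,v)=u_{1}v_{1}+\lambda u_{2}v_{2}+\mu u_{3}v_{3}$ for $u=\sum u_{i}e_{i}$, $v=\sum v_{i}e_{i}$, the multiplication collapses to the rank-one rule $uv=B(u,v)\,w$. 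Consequently $E_{\mathcal{M}}^{2}=\langle w\rangle$ is one-dimensional (note that $w\ne 0$ in every case of the statement, since either $\alpha\ne 0$ or $\alpha=0,\ \beta\ne0$), so $E_{\mathcal{M}}$ must be one of the twelve algebras $E_{1},\dots,E_{12}$ of \cite{4}. The entire proof then amounts to deciding which one, and I would do this exactly as in the proof of Lemma 1: posit the change of basis (\ref{3.3}), write out the six products $e_{i}'e_{j}'$, and replace each by $e_{i}'e_{j}'=B(e_{i}',e_{j}')\,w$, turning the isomorphism conditions into scalar equations in the coordinates together with the requirement that the common image $w$ match the prescribed multiple of the target's first basis vector.

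The two invariants that organize the case split are, first, the scalar
$$Q(w)=B(w,w)=\alpha^{2}+\lambda\beta^{2}+\mu\gamma^{2},$$
which is precisely the coefficient in $w\,w=Q(w)\,w$, and second, the real signature of the diagonal form $B$, governed by the signs of $\lambda$ and $\mu$ (its first diagonal entry being $+1$). When $Q(w)=0$ the generator $w$ of $E_{\mathcal{M}}^{2}$ is nilpotent, which forces $E_{\mathcal{M}}$ into the family $\{E_{1},E_{2},E_{3}\}$ (Case I); there the subcases (a)--(c) are separated by which of $\lambda,\mu,\beta,\gamma$ vanish and by the sign of the surviving parameter, which fixes whether the third target generator squares to $0$ or to $\pm w$. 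When $Q(w)\ne0$ the rescaled vector $v_{1}=w/Q(w)$ is idempotent, $v_{1}v_{1}=v_{1}$, and $E_{\mathcal{M}}$ lands in $\{E_{4},\dots,E_{9}\}$ (Case II): one completes $v_{1}$ by two vectors spanning the $B$-orthogonal complement of $w$, and the classes $E_{4},\dots,E_{9}$ are distinguished by how many of these complementary directions square to $+v_{1}$, $-v_{1}$, or $0$. Finally, the degenerate position $\alpha=0,\ \beta\ne0$ (Case III) is treated separately because then $w$ has no $e_{1}$-component; this extra degeneracy is exactly what permits the nilpotent-image algebras $E_{10},E_{11},E_{12}$ to occur alongside the others.

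For each individual subcase (a)--(s) I would then exhibit a concrete basis change with nonzero determinant, in the style of (\ref{3.11})--(\ref{3.15}), and verify the target multiplication table by a one-line computation using $uv=B(u,v)\,w$; in the complementary ranges one shows that no such change exists by deriving, as in Lemma 1, a forced linear dependence contradicting (\ref{3.4}). These explicit verifications are the routine part. The main obstacle is the real-signature bookkeeping: over $\mathbb{R}$ the signs attached to the complementary squares are not read off from $\lambda,\mu$ alone but from the Gram--Schmidt pivots obtained while orthogonalizing against $w$, which is why intermediate quantities such as $\alpha^{2}+\lambda\beta^{2}$ (and not merely $\lambda$ and $\mu$) enter the conditions of parts (h) and (i). Keeping track of the interplay between the sign of $Q(w)$, the signs of these pivots, and the vanishing patterns of $\lambda,\mu,\beta,\gamma$ is the delicate step, and arranging the numerous subcases of II and III so that they exhaust the parameter space without overlap is where most of the care is required.
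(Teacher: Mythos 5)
Your structural reformulation is correct and genuinely clarifying: writing $uv=B(u,v)\,w$ with $B=\operatorname{diag}(1,\lambda,\mu)$ and $w=\alpha e_{1}+\beta e_{2}+\gamma e_{3}$ is exactly what underlies the paper's computations, and the invariants you isolate --- the scalar $Q(w)=\alpha^{2}+\lambda\beta^{2}+\mu\gamma^{2}$, which is the coefficient in $ww=Q(w)w$, together with the signature data of $B$ --- are the right organizing principles. The intended execution, an explicit nonsingular basis change for each subcase verified through $uv=B(u,v)w$, is the same route the paper takes, so this is not a different proof but a better-organized sketch of the same one.

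The gap is that the execution never happens. The lemma is a list of roughly nineteen subcases with specific vanishing and sign conditions, and those conditions \emph{are} the content of the statement; your proposal derives none of them, deferring them as ``routine'' while conceding in the same breath that the signature bookkeeping is ``the delicate step.'' Concretely: (i) the assertion that $\alpha\neq 0$, $Q(w)=0$ forces membership in $\{E_{1},E_{2},E_{3}\}$ does not follow from your two invariants, since $E_{10},E_{11},E_{12}$ also have a nilpotent generator of $E^{2}$; excluding them needs a further invariant such as $E\cdot E^{2}\neq 0$, equivalently $w\notin\operatorname{rad}B$, and separating $E_{1}$ from $E_{2},E_{3}$ needs the rank of $B$, which you never mention. (ii) In Case II the square of a direction $u$ in the $B$-orthogonal complement of $w$ is $B(u,u)Q(w)\,v_{1}$ after normalizing $v_{1}=w/Q(w)$, so the relevant sign is $\operatorname{sign}\bigl(B(u,u)Q(w)\bigr)$ and not $\operatorname{sign}(B(u,u))$; without writing this down one cannot arrive at the correct distribution of conditions between parts (h) and (i). (iii) Nothing is said about why the listed conditions exhaust and partition the parameter space, nor about which of the twelve target algebras your invariants actually separate --- a point your plan silently assumes. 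As written this is a sound strategy for a proof, not a proof: to complete it you must either carry out the basis changes case by case, as the paper does, or replace them by a genuine invariant-theoretic argument (Sylvester/Witt applied to the pair $(B,w)$ modulo the rescaling $(B,w)\mapsto(c^{-1}B,\,cw)$), and then still translate the resulting invariants into the explicit inequalities appearing in the statement.
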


\begin{proof}
Let  $$M_{B}=\left(\begin{array}{cccccc}
\alpha & \beta & \gamma\\
\lambda\alpha & \lambda\beta & \lambda\gamma\\
\mu\alpha & \mu\beta & \mu\gamma\end{array}\right) $$
 be the matrix of structural constants of the evolution algebra $E_{M_{B}}$  relative to a  basis $B=\{e_{1},e_{2},e_{3}\}$. Then,
the multiplication table in $E_{M_{B}}$ is
\begin{equation}\label{3.27}\begin{array}{c}
e_{1}e_{1}= \alpha e_{1}+\beta e_{2}+\gamma e_{3}, \ e_{2}e_{2}=\lambda\alpha e_{1}+\lambda\beta e_{2}+\lambda\gamma e_{3},\\ [1mm]
 e_{3}e_{3}=\mu\alpha e_{1}+\mu\beta e_{2}+\mu\gamma e_{3}, \ e_{1}e_{2}=e_{1}e_{3}=e_{2}e_{3}=0.
 \end{array}
\end{equation}
We consider all possible cases.

\textbf{Case 1.} Suppose that $\alpha\neq 0$.

\textbf{Case 1.1.} Assume that $\alpha^{2}+\beta^{2}\lambda+\gamma^{2}\mu=0$. In this case  may be one of the following cases:
\begin {itemize}
\item[1)]  $ \mu=0, \ \lambda\beta\gamma\neq 0; $
\item[2)]  $ \lambda=0, \ \beta\gamma\mu\neq 0;$
\item[3)]  $\gamma=0, \  \lambda\beta\mu\neq 0;$
\item[4)]  $\beta=0, \ \lambda\gamma\mu\neq 0;$
\item[5)]  $\gamma=\mu=0, \ \lambda\beta\neq 0;$
\item[6)]  $\lambda=\beta=0, \ \gamma\mu\neq 0;$
\item[7)]  $\lambda\beta\gamma\mu\neq 0.$
\end {itemize}
\textbf{Case 1.1.1.} If $ \mu=0, \ \lambda\beta\gamma\neq 0 $  then $\alpha^{2}+\beta^{2}\lambda=0$. This implies that $\lambda=-\frac{\alpha^{2}}{\beta^{2}}$.
In this case the matrix $M_{B}$ will be
$$ M_{B}=\left(\begin{array}{cccccc}
\alpha & \beta & \gamma\\
 -\frac{\alpha^{3}}{\beta^{2}}& -\frac{\alpha^{2}}{\beta} & -\frac{\gamma\alpha^{2}}{\beta^{2}}\\
0 & 0 & 0\end{array}\right). $$
If we take the change of basis $B'=\{ \frac{1}{\alpha}e_{1},\ \frac{\beta }{\alpha^{2}}e_{2},\  \frac{\gamma }{\alpha^{2}}e_{3} \}$ then
$$ M_{B'}=\left(\begin{array}{cccccc}
 \ 1 & \ 1 & \ 1\\
-1 & -1 & -1\\
 \ 0 & \ 0 & \ 0\end{array}\right). $$
 Let take  another change of basis $B^{''}=\{ -e_{2}, \ -e_{1}-e_{3}, \ e_{3}\}$ and  we find a structure matrix with more zeros. Then
 $$ M_{B^{''}}=\left(\begin{array}{cccccc}
 \ 1 & \ 1 &  0\\
-1 & -1 & 0\\
 \ 0 & \ 0 &  0\end{array}\right). $$
 Consequently, in this case $E_{M_{B}}$ is isomorphic to $E_{1}$.

 \textbf{Case 1.1.2.} Let  $ \lambda=0, \ \beta\gamma\mu\neq 0$ then $\alpha^{2}+\gamma^{2}\mu=0$. From this equality we find $\mu=-\frac{\alpha^{2}}{\gamma^{2}}$.
In this case the structure matrix is:
$$ M_{B}=\left(\begin{array}{cccccc}
\alpha & \beta & \gamma\\
0 & 0 & 0 \\
 -\frac{\alpha^{3}}{\gamma^{2}}& -\frac{\alpha^{2}\beta}{\gamma^{2}} & -\frac{\alpha^{2}}{\gamma}\
\end{array}\right). $$
If we take the change of basis $B'=\{ \frac{1}{\alpha}e_{1},\ \frac{\beta }{\alpha^{2}}e_{2},\  \frac{\gamma }{\alpha^{2}}e_{3} \}$ then
$$ M_{B'}=\left(\begin{array}{cccccc}
  1 &  1 &  1\\
0 & 0 & 0\\
 -1 & -1  &-1 \end{array}\right). $$
 Let $B^{''}=\{ -e_{3}, \ -e_{1}-e_{2}, \ e_{2}\}$, then by this change of basis we have
 $$ M_{B^{''}}=\left(\begin{array}{cccccc}
 \ 1 & \ 1 &  0\\
-1 & -1 & 0\\
 \ 0 & \ 0 &  0\end{array}\right). $$
 So, in this case also $E_{M_{B}}$ is isomorphic to $E_{1}$.

 \textbf{Case 1.1.3.} Let $\gamma=0, \  \lambda\beta\mu\neq 0$. Then $\alpha^{2}+\beta^{2}\lambda=0$ and  $\lambda=-\frac{\alpha^{2}}{\beta^{2}}$.
 In this case the structure matrix is:
 $$ M_{B}=\left(\begin{array}{cccccc}
\alpha & \beta & 0\\
 -\frac{\alpha^{3}}{\beta^{2}}& -\frac{\alpha^{2}}{\beta} & 0\\
\mu\alpha & \mu\beta & 0\end{array}\right). $$
Assume $\mu>0$. If we take the change of basis  $B'=\{ \frac{1}{\alpha}e_{1},\ \frac{\beta }{\alpha^{2}}e_{2},\  \frac{1 }{\alpha\sqrt{\mu}}e_{3} \}$ then
$$ M_{B'}=\left(\begin{array}{cccccc}
 \ 1 & \ 1 &  0\\
-1 & -1 & 0\\
 \ 1 & \ 1 &  0\end{array}\right). $$
Thus in this case  $E_{M_{B}}$ is isomorphic to $E_{2}$.

Assume $\mu<0$. If we take $B'=\{ \frac{1}{\alpha}e_{1},\ \frac{\beta }{\alpha^{2}}e_{2},\  \frac{1 }{\alpha\sqrt{-\mu}}e_{3} \}$ then
$$ M_{B'}=\left(\begin{array}{cccccc}
 \ 1 & \ 1 &  0\\
-1 & -1 & 0\\
 - 1 & - 1 &  0\end{array}\right). $$
 In this case  $E_{M_{B}}$ is isomorphic to $E_{3}$.

\textbf{Case 1.1.4.} Let $\beta=0, \ \lambda\gamma\mu\neq 0$. Then  $\alpha^{2}+\gamma^{2}\mu=0$ and $\mu=-\frac{\alpha^{2}}{\gamma^{2}}$.
We have
$$ M_{B}=\left(\begin{array}{cccccc}
\alpha & 0 & \gamma\\
\lambda\alpha & 0 & \lambda\gamma \\
 -\frac{\alpha^{3}}{\gamma^{2}}& 0 & -\frac{\alpha^{2}}{\gamma}\
\end{array}\right). $$
If we take the  basis $B'=\{e_{1}, \ e_{3}, \ e_{2}\}$ then
$$ M_{B'}=\left(\begin{array}{cccccc}
\alpha & \gamma & 0\\
 -\frac{\alpha^{3}}{\gamma^{2}}& -\frac{\alpha^{2}}{\gamma} & 0\\
 \lambda\alpha & \lambda\gamma & 0 \\
\end{array}\right). $$
This case is similar to the case 1.1.3. $E_{M_{B}}$ is isomorphic to $E_{2}$ when $\lambda>0$, isomorphic to $E_{3}$ when $\lambda<0$.

\textbf{Case 1.1.5.} Let $\gamma=\mu=0, \ \lambda\beta\neq 0$. Then $\alpha^{2}+\beta^{2}\lambda=0$ and  $\lambda=-\frac{\alpha^{2}}{\beta^{2}}$.
In this case the structure matrix is:
$$ M_{B}=\left(\begin{array}{cccccc}
\alpha & \beta & 0\\
 -\frac{\alpha^{3}}{\beta^{2}}& -\frac{\alpha^{2}}{\beta} & 0\\
0 & 0 & 0\end{array}\right). $$
If we take the basis $B'=\{ \frac{1}{\alpha}e_{1},\ \frac{\beta }{\alpha^{2}}e_{2},\ e_{3} \}$ then
$$ M_{B'}=\left(\begin{array}{cccccc}
 \ 1 & \ 1 &  0\\
-1 & -1 & 0\\
 \ 0 & \ 0 &  0\end{array}\right). $$
In this case  $E_{M_{B}}$ is isomorphic to $E_{1}$.

\textbf{Case 1.1.6.} Let $\lambda=\beta=0, \ \gamma\mu\neq 0 $. Taking the change of basis $B'=\{ e_{1}, \ e_{3}, \ e_{2}\}$
then  we are in the case 1.1.5.

 \textbf{Case 1.1.7.} Let $\lambda\beta\gamma\mu\neq 0.$ If we take the basis
 $B'=\{ \frac{1}{\alpha}e_{1},\ \frac{\beta }{\alpha^{2}}e_{2},\  \frac{\gamma }{\alpha^{2}}e_{3} \}$ then
$$ M_{B'}=\left(\begin{array}{cccccc}
 \ 1 & \ 1 &  1\\[1mm]
\frac{\lambda\beta^{2}}{\alpha^{2}} & \frac{\lambda\beta^{2}}{\alpha^{2}} & \frac{\lambda\beta^{2}}{\alpha^{2}}\\[1mm]
 \frac{\mu\gamma^{2}}{\alpha^{2}} & \frac{\mu\gamma^{2}}{\alpha^{2}} &  \frac{\mu\gamma^{2}}{\alpha^{2}}\end{array}\right). $$
From the equality $\alpha^{2}+\beta^{2}\lambda+\gamma^{2}\mu=0$ we have $\frac{\lambda\beta^{2}}{\alpha^{2}}=-1-\frac{\mu\gamma^{2}}{\alpha^{2}}$.
Hence
$$ M_{B'}=\left(\begin{array}{cccccc}
 \ 1 & \ 1 &  1\\[1mm]
-1-\frac{\mu\gamma^{2}}{\alpha^{2}} & -1-\frac{\mu\gamma^{2}}{\alpha^{2}} & -1-\frac{\mu\gamma^{2}}{\alpha^{2}}\\[1mm]
 \frac{\mu\gamma^{2}}{\alpha^{2}} & \frac{\mu\gamma^{2}}{\alpha^{2}} &  \frac{\mu\gamma^{2}}{\alpha^{2}}\end{array}\right). $$
Now, we take the  change of basis $B^{''}=\{ e_{1}', \ e_{2}', \ e_{3}' \}$ such that  which the matrix  is following
$$ P_{B^{''}B'}=\left(\begin{array}{cccccc}
 \frac{\alpha^{2}(K+1)}{2(\alpha^{2}+\mu\gamma^{2})} & \frac{\alpha^{2}(K-1)}{2(\alpha^{2}+\mu\gamma^{2})} &  \frac{\alpha^{2}(K+1)}{2(\alpha^{2}+\mu\gamma^{2})}\\[2mm]
 \frac{\alpha^{2}(K-1)}{2(\alpha^{2}+\mu\gamma^{2})} & \frac{\alpha^{2}(K+1)}{2(\alpha^{2}+\mu\gamma^{2})} & \frac{\alpha^{2}(K-1)}{2(\alpha^{2}+\mu\gamma^{2})}\\[2mm]
 -\frac{\mu\gamma^{2}}{\alpha^{2}} & \ 0 &  1\end{array}\right), $$
 where $K=\frac{\mu\gamma^{2}}{\alpha^{2}}(1+ \frac{\mu\gamma^{2}}{\alpha^{2}})^{2}$ and note that $|P_{B^{''}B'}|= \frac{\mu\gamma^{2}(\alpha^{2}+\mu\gamma^{2})}{\alpha^{4}}\neq 0.$
Then we obtain
$$ M_{B^{''}}=\left(\begin{array}{cccccc}
 \ 1 & \ 1 &  0\\
-1 & -1 & 0\\
 \ 1 & \ 1 &  0\end{array}\right). $$
 In this case  $E_{M_{B}}$ is isomorphic to $E_{2}$.

 \textbf{Case 1.2.} Assume that $\alpha^{2}+\beta^{2}\lambda+\gamma^{2}\mu\neq 0$.
 We consider the next cases.

 \textbf{Case 1.2.1.} Suppose $\lambda=0, \ \mu=0$. In this case the structure matrix is:
 $$ M_{B}=\left(\begin{array}{cccccc}
\alpha & \beta & \gamma\\
  0 & 0 & 0\\
0 & 0 & 0\end{array}\right). $$
Consider the change of basis $B'=\{\frac{1}{\alpha} e_{1}+\frac{\beta}{\alpha^{2}} e_{2}+\frac{\gamma}{\alpha^{2}} e_{3}; e_{2}-e_{3}; 4e_{2}+e_{3}  \}$. Then
$$ M_{B'}=\left(\begin{array}{cccccc}
 1 & 0 &  0\\
 0 & 0 & 0\\
 0 & 0 &  0\end{array}\right). $$
In this case  $E_{M_{B}}$ is isomorphic to $E_{4}$.

 \textbf{Case 1.2.2.} Suppose $\lambda=0, \ \mu\neq 0$. Then $\alpha^{2}+\gamma^{2}\mu\neq 0$. \\
 For
  $B'=\{\frac{1}{\alpha} e_{1}+\frac{\beta}{\alpha^{2}} e_{2}+\frac{\gamma}{\alpha^{2}} e_{3}; e_{2}; -\frac{\gamma\mu}{\alpha} e_{1}+e_{2}+e_{3}  \}$
the structure matrix is
$$ M_{B'}=\left(\begin{array}{cccccc}
 1+ \frac{\mu\gamma^{2}}{\alpha^{2}} & 0 &  0\\
 0 & 0 & 0\\
 \mu(\alpha^{2}+\mu\gamma^{2}) & 0 &  0\end{array}\right). $$

\textbf{Case 1.2.2.1.} Assume that $\mu>0$. Consider $B^{''}=\{ \frac{\alpha^{2}}{\alpha^{2}+\mu\gamma^{2}}e_{1};\ e_{2}; \ \frac{\alpha}{\sqrt{\mu}(\alpha^{2}+\mu\gamma^{2})}e_{3} \}$.
Then
$ M_{B^{''}}=\left(\begin{array}{cccccc}
 1 & 0 &  0\\
 0 & 0 & 0\\
 1 & 0 &  0\end{array}\right). $
In this case  $E_{M_{B}}$ is isomorphic to $E_{5}$.

\textbf{Case 1.2.2.2.} Assume that $\mu<0$. Consider $B^{''}=\{ \frac{\alpha^{2}}{\alpha^{2}+\mu\gamma^{2}}e_{1};\ e_{2}; \ \frac{\alpha}{\sqrt{-\mu}(\alpha^{2}+\mu\gamma^{2})}e_{3} \}$.
Then
$ M_{B^{''}}=\left(\begin{array}{cccccc}
 1 & 0 &  0\\
 0 & 0 & 0\\
 -1 & 0 &  0\end{array}\right). $
In this case  $E_{M_{B}}$ is isomorphic to $E_{6}$.

\textbf{Case 1.2.3.} Assume that $\lambda\neq 0$ and $\mu=0$. If we take  the basis $B'=\{ e_{1}, \ e_{3}, \ e_{2} \}$ then  we are  in the same conditions as in Case 1.2.2.
Namely, $E_{M_{B}}$ is isomorphic to $E_{5}$ and $E_{6}$ respectively $\lambda>0$ and $\lambda<0$.

\textbf{Case 1.2.4.} Assume that $\lambda\neq 0$ , $\mu\neq0$ and $\alpha^{2}+\lambda\beta^{2}\neq 0$. If $B'$ is the change of basis  such that
$$ P_{B'B}=\left(\begin{array}{cccccc}
\frac{1}{\alpha} & \frac{\beta}{\alpha^{2}} & \frac{\gamma}{\alpha^{2}}\\[1mm]
  -\frac{\lambda\beta}{\alpha} & 1 & 0\\[1mm]
\frac{-\mu\gamma}{\alpha^{2}+\lambda\beta^{2}} & \frac{-\beta\mu\gamma}{\alpha(\alpha^{2}+\lambda\beta^{2})} & \frac{1}{\alpha}\end{array}\right) $$
then we obtain
$$ M_{B'}=\left(\begin{array}{cccccc}
 \frac{1}{\alpha^{2}}(\alpha^{2}+\lambda\beta^{2}+\mu\gamma^{2}) & 0 &  0\\[1mm]
 \lambda(\alpha^{2}+\lambda\beta^{2}) & 0 & 0\\[1mm]
 \frac{\mu(\alpha^{2}+\lambda\beta^{2}+\mu\gamma^{2})}{\alpha^{2}+\lambda\beta^{2}} & 0 &  0\end{array}\right). $$
Note that $|P_{B'B}|=\frac{1}{\alpha^{4}}(\alpha^{2}+\lambda\beta^{2}+\mu\gamma^{2})\neq 0$.

\textbf{Case 1.2.4.1.} If $\lambda>0$ and $\mu>0$ then consider the change of basis $B^{''}=\{e_{1}^{''} ,\ e_{2}^{''}, \ e_{3}^{''}\}$
$$ P_{B^{''}B'}=\left(\begin{array}{cccccc}
 \frac{\alpha^{2}}{\alpha^{2}+\lambda\beta^{2}+\mu\gamma^{2}} & 0 &  0\\[2mm]
 0 & \frac{\alpha}{\sqrt{\lambda(\alpha^{2}+\lambda\beta^{2})(\alpha^{2}+\lambda\beta^{2}+\mu\gamma^{2})}} & 0\\[2mm]
 0 & \ 0 &  \frac{\alpha\sqrt{\alpha^{2}+\lambda\beta^{2}}}{\sqrt{\mu}(\alpha^{2}+\lambda\beta^{2}+\mu\gamma^{2})}\end{array}\right) $$
and the structure matrix is
$$ M_{B^{''}}=\left(\begin{array}{cccccc}
 1 & 0 &  0\\
 1 & 0 & 0\\
 1 & 0 &  0\end{array}\right). $$
In this case  $E_{M_{B}}$ is isomorphic to $E_{7}$.

\textbf{Case 1.2.4.2.} If $\lambda>0$ and $\mu<0$. Assume that $\alpha^{2}+\lambda\beta^{2}+\mu\gamma^{2}>0$.
Consider the change of basis $B^{''}=\{e_{1}^{''} ,\ e_{2}^{''}, \ e_{3}^{''}\}$
$$ P_{B^{''}B'}=\left(\begin{array}{cccccc}
 \frac{\alpha^{2}}{\alpha^{2}+\lambda\beta^{2}+\mu\gamma^{2}} & 0 &  0\\[2mm]
 0 & \frac{\alpha}{\sqrt{\lambda(\alpha^{2}+\lambda\beta^{2})(\alpha^{2}+\lambda\beta^{2}+\mu\gamma^{2})}} & 0\\[2mm]
 0 & \ 0 &  \frac{\alpha\sqrt{\alpha^{2}+\lambda\beta^{2}}}{\sqrt{-\mu}(\alpha^{2}+\lambda\beta^{2}+\mu\gamma^{2})}\end{array}\right) $$
and the structure matrix is
$$ M_{B^{''}}=\left(\begin{array}{cccccc}
 1 & 0 &  0\\
 1 & 0 & 0\\
 -1 & 0 &  0\end{array}\right). $$
In this case  $E_{M_{B}}$ is isomorphic to $E_{8}$.

\textbf{Case 1.2.4.3.} If $\lambda>0$ and $\mu<0$. Assume that $\alpha^{2}+\lambda\beta^{2}+\mu\gamma^{2}<0$.
Consider the change of basis $B^{''}=\{e_{1}^{''} ,\ e_{2}^{''}, \ e_{3}^{''}\}$
$$ P_{B^{''}B'}=\left(\begin{array}{cccccc}
 \frac{\alpha^{2}}{\alpha^{2}+\lambda\beta^{2}+\mu\gamma^{2}} & 0 &  0\\[2mm]
 0 & \frac{\alpha}{\sqrt{-\lambda(\alpha^{2}+\lambda\beta^{2})(\alpha^{2}+\lambda\beta^{2}+\mu\gamma^{2})}} & 0\\[2mm]
 0 & \ 0 &  \frac{\alpha\sqrt{\alpha^{2}+\lambda\beta^{2}}}{\sqrt{-\mu}(\alpha^{2}+\lambda\beta^{2}+\mu\gamma^{2})}\end{array}\right) $$
and the structure matrix is
$$ M_{B^{''}}=\left(\begin{array}{cccccc}
 1 & 0 &  0\\
 -1 & 0 & 0\\
 -1 & 0 &  0\end{array}\right). $$
In this case  $E_{M_{B}}$ is isomorphic to $E_{9}$.

\textbf{Case 1.2.4.4.} If $\lambda<0$ and $\mu>0$. This case is similar to the Case 1.2.4.2 and Case 1.2.4.3. There are only differences in conditions.\\
The structure matrix is   $\left(\begin{array}{cccccc}
 1 & 0 &  0\\
 1 & 0 & 0\\
 -1 & 0 &  0\end{array}\right) $  when $\alpha^{2}+\lambda\beta^{2}<0$ and $\alpha^{2}+\lambda\beta^{2}+\mu\gamma^{2}>0$.\\
 The structure matrix is   $\left(\begin{array}{cccccc}
 1 & 0 &  0\\
 -1 & 0 & 0\\
 -1 & 0 &  0\end{array}\right) $  when $\alpha^{2}+\lambda\beta^{2}<0$ and $\alpha^{2}+\lambda\beta^{2}+\mu\gamma^{2}<0$.\\
 The structure matrix is   $\left(\begin{array}{cccccc}
 1 & 0 &  0\\
 -1 & 0 & 0\\
 1 & 0 &  0\end{array}\right) $  when $\alpha^{2}+\lambda\beta^{2}>0$ and if we take the change of basis $\{e_{1}, \ e_{3}, \ e_{2} \}$ then
 we have $\left(\begin{array}{cccccc}
 1 & 0 &  0\\
 1 & 0 & 0\\
 -1 & 0 &  0\end{array}\right) $.

 \textbf{Case 1.2.4.5.} If $\lambda<0$ and $\mu<0$.
 In this case the same as above,  There are  differences in conditions.\\
The structure matrix is   $\left(\begin{array}{cccccc}
 1 & 0 &  0\\
 1 & 0 & 0\\
 -1 & 0 &  0\end{array}\right) $  when $\alpha^{2}+\lambda\beta^{2}>0$ and $\alpha^{2}+\lambda\beta^{2}+\mu\gamma^{2}<0$.\\
The structure matrix is   $\left(\begin{array}{cccccc}
 1 & 0 &  0\\
 -1 & 0 & 0\\
 -1 & 0 &  0\end{array}\right) $  when $\alpha^{2}+\lambda\beta^{2}>0$ and $\alpha^{2}+\lambda\beta^{2}+\mu\gamma^{2}>0$.\\
The structure matrix is   $\left(\begin{array}{cccccc}
 1 & 0 &  0\\
 -1 & 0 & 0\\
 1 & 0 &  0\end{array}\right) $  when $\alpha^{2}+\lambda\beta^{2}<0$ and if we take the change of basis $\{e_{1}, \ e_{3}, \ e_{2} \}$ then
 we have $\left(\begin{array}{cccccc}
 1 & 0 &  0\\
 1 & 0 & 0\\
 -1 & 0 &  0\end{array}\right) $.

\textbf{Case 1.2.5.} Suppose that $\lambda\neq 0, \ \mu\neq 0$ and $\alpha^{2}+\lambda\beta^{2}= 0$. Then  $\lambda\beta\mu\gamma\neq 0$ and
so $\lambda=-\frac{\alpha^{2}}{\beta^{2}}$.
If we take  the change of basis $B'=\{ e_{1}', \ e_{2}', \ e_{3}'  \}$  such that
$$ P_{B'B}=\left(\begin{array}{cccccc}
 \frac{1}{\alpha}& \frac{\beta}{\alpha^{2}} & \frac{\gamma}{\alpha^{2}}\\[1mm]
  -\frac{\mu}{2\alpha} & \frac{\beta\mu}{2\alpha^{2}} & \frac{1}{\gamma}\\[1mm]
\frac{2\alpha^{2}-\mu\gamma^{2}}{2\alpha^{3}} & \frac{\beta(2\alpha^{2}+\mu\gamma^{2})}{2\alpha^{4}} & \frac{\gamma}{\alpha^{2}}\end{array}\right) $$
then we obtain
$$\begin{array}{cccccc}
e_{1}'e_{1}'=\frac{1}{\alpha^{2}}e_{1}^{2}+\frac{\beta^{2}}{\alpha^{4}}e_{2}^{2}+\frac{\gamma^{2}}{\alpha^{4}}e_{3}^{2}=(\frac{1}{\alpha^{2}} +\lambda\frac{\beta^{2}}{\alpha^{4}}
+ \mu \frac{\gamma^{2}}{\alpha^{4}})(\alpha e_{1}+\beta e_{2}+\gamma e_{3})=\frac{\mu\gamma^{2}}{\alpha^{2}}e_{1}',\\[2mm]
e_{2}'e_{2}'= \frac{\mu^{2}}{4\alpha^{2}}e_{1}^{2}  + \frac{\beta^{2}\mu^{2}}{4\alpha^{4}}e_{2}^{2}   + \frac{1}{\gamma^{2}}e_{3}^{2}=
(\frac{\mu^{2}}{4\alpha^{2}} + \lambda\frac{\beta^{2}\mu^{2}}{4\alpha^{4}} + \frac{\mu}{\gamma^{2}})(\alpha e_{1}+\beta e_{2}+\gamma e_{3})=\frac{\mu\alpha^{2}}{\gamma^{2}}e_{1}',\\[2mm]
e_{3}'e_{3}'=\frac{(2\alpha^{2}-\mu\gamma^{2})^{2}}{4\alpha^{6}}e_{1}^{2}  +  \frac{\beta^{2}(2\alpha^{2}+\mu\gamma^{2})^{2}}{4\alpha^{8}}e_{2}^{2}  +
\frac{\gamma^{2}}{\alpha^{4}}e_{3}^{2}=( \frac{(2\alpha^{2}-\mu\gamma^{2})^{2}}{4\alpha^{6}}+  \lambda\frac{\beta^{2}(2\alpha^{2}+\mu\gamma^{2})^{2}}{4\alpha^{8}} + \\[2mm]
 + \mu\frac{\gamma^{2}}{\alpha^{4}})(\alpha e_{1}+\beta e_{2}+\gamma e_{3})=-\frac{\mu\gamma^{2}}{\alpha^{4}}(\alpha e_{1}+\beta e_{2}+\gamma e_{3})=
-\frac{\mu\gamma^{2}}{\alpha^{2}}e_{1}'.\\
\end{array}
$$
Moreover
$$\begin{array}{cccccc}
e_{1}'e_{2}'=-\frac{\mu}{2\alpha^{2}}e_{1}^{2}  + \frac{\beta^{2}\mu}{2\alpha^{4}}e_{2}^{2}  + \frac{1}{\alpha^{2}}e_{3}^{2}=
e_{1}^{2}(-\frac{\mu}{2\alpha^{2}}  + \lambda\frac{\beta^{2}\mu}{2\alpha^{4}} + \frac{\mu}{\alpha^{2}} )=e_{1}^{2}(\lambda\frac{\beta^{2}\mu}{2\alpha^{4}} + \frac{\mu}{2\alpha^{2}})=0\\[2mm]
e_{1}'e_{3}'=\frac{2\alpha^{2}-\mu\gamma^{2}}{2\alpha^{4}}e_{1}^{2}  + \frac{\beta^{2}(2\alpha^{2}+\mu\gamma^{2})}{2\alpha^{6}}e_{2}^{2} +
\frac{\gamma^{2}}{\alpha^{4}}e_{3}^{2}=\frac{2\alpha^{2}(\alpha^{2}+\lambda\beta^{2})+ \mu\gamma^{2}(\alpha^{2}+\lambda\beta^{2})}{2\alpha^{6}}e_{1}^{2}=0\\[2mm]
e_{2}'e_{3}'=\frac{-\mu(2\alpha^{2}-\mu\gamma^{2})}{4\alpha^{4}} e_{1}^{2} + \frac{\mu\beta^{2}(2\alpha^{2}+\mu\gamma^{2})}{4\alpha^{6}} e_{2}^{2}
+ \frac{1}{\alpha^{2}}e_{3}^{2}=e_{1}^{2}(\frac{-2\mu\alpha^{2}+\mu^{2}\gamma^{2}}{4\alpha^{4}} + \frac{2\alpha^{2}\mu\lambda\beta^{2}+\mu^{2}\gamma^{2}\lambda\beta^{2}}{4\alpha^{6}}+\\[2mm]
+\frac{\mu}{\alpha^{2}})=e_{1}^{2}(\frac{-2\mu\alpha^{4}+2\mu\alpha^{2}\lambda\beta^{2}}{4\alpha^{6}} + \frac{\mu}{\alpha^{2}})=
\frac{-2\mu\alpha^{4}+2\mu\alpha^{2}\lambda\beta^{2}+4\mu\alpha^{4}}{4\alpha^{6}}e_{1}^{2}=\frac{2\mu\alpha^{2}(\alpha^{2}+\lambda\beta^{2})}{4\alpha^{6}}=0,
\end{array}
$$
note that $|P_{B'B}|=-\frac{\beta\mu\gamma}{\alpha^{5}}\neq 0$.
So, we have
$$ M_{B'}=\left(\begin{array}{cccccc}
 \frac{\mu\gamma^{2}}{\alpha^{2}} & 0 &  0\\[1mm]
 \frac{\mu}{\gamma^{2}}\alpha^{2} & 0 & 0\\[1mm]
 -\frac{\mu\gamma^{2}}{\alpha^{2}} & 0 &  0\end{array}\right). $$
Considering the change of basis $B^{''}=\{ \frac{\alpha^{2}}{\mu\gamma^{2}}e_{1}, \ \frac{1}{\mu}e_{2}, \ \frac{-\alpha^{2}}{\mu\gamma^{2}}e_{3} \}$
and we obtain
$$ M_{B^{''}}=\left(\begin{array}{cccccc}
 \ 1 & 0 &  0\\
 \ 1 & 0 & 0\\
 -1 & 0 &  0\end{array}\right). $$
In this case  $E_{M_{B}}$ is isomorphic to $E_{8}$.

\textbf{Case 2.} Suppose that $\alpha=0$. In this case the structure matrix of the evolution algebra is
$$ M_{B}=\left(\begin{array}{cccccc}
 0 & \beta &  \gamma\\
 0 & \lambda\beta & \lambda\gamma\\
 0 & \mu\beta &  \mu\gamma\end{array}\right). $$
 Necessarily that $\beta^{2}+\gamma^{2}\neq 0$. Without loss of generality we assume $\beta\neq 0$.

\textbf{Case 2.1.} Assume $\lambda\neq 0$. Consider the change of basis $B'=\{e_{2}, \ e_{3}, \ e_{1}\}$. Then
$$ M_{B'}=\left(\begin{array}{cccccc}
 \lambda\beta & \lambda\gamma & 0\\
 \mu\beta & \mu\gamma & 0 \\
 \beta & \gamma & 0 \end{array}\right). $$

\textbf{Case 2.1.1.}  Suppose $\mu=0$, $\gamma\neq 0$ and $\lambda>0$. Consider the change of basis \\
$B^{''}=\{ e_{1}^{''}, \ e_{2}^{''}, \ e_{3}^{''} \}$  such that
$$ P_{B^{''}B'}=\left(\begin{array}{cccccc}
 \frac{1}{\lambda\beta} & \frac{\gamma}{\lambda\beta^{2}} & 0\\
 0 & 1 & 0 \\
 0 & 0 & \frac{\sqrt{\lambda}}{\lambda\beta} \end{array}\right). $$
We have
$$ M_{B^{''}}=\left(\begin{array}{cccccc}
  1 & 0 &  0\\
  0 & 0 & 0\\
  1 & 0 &  0\end{array}\right). $$
In this case  $E_{M_{B}}$ is isomorphic to $E_{5}$.

If $\lambda<0$ then we take as $B^{''}=\{ e_{1}^{''}, \ e_{2}^{''}, \ e_{3}^{''} \}$ the following
$$ P_{B^{''}B'}=\left(\begin{array}{cccccc}
 \frac{1}{\lambda\beta} & \frac{\gamma}{\lambda\beta^{2}} & 0\\
 0 & 1 & 0 \\
 0 & 0 & \frac{\sqrt{-\lambda}}{\lambda\beta} \end{array}\right). $$
We have
$$ M_{B^{''}}=\left(\begin{array}{cccccc}
  1 & 0 &  0\\
  0 & 0 & 0\\
  -1 & 0 &  0\end{array}\right). $$
In this case  $E_{M_{B}}$ is isomorphic to $E_{6}$.

\textbf{Case 2.1.2.} The cases $\mu=0, \ \gamma=0, \lambda>0$ and   $\mu=0, \ \gamma=0, \lambda<0$ are similar to the previous case.
In this case  $E_{M_{B}}$ is isomorphic to $E_{5}$  and $E_{6}$ respectively.

\textbf{Case 2.1.3.} Assume $\mu\neq 0, \ \gamma=0$. In this case if $\lambda>0, \ \mu>0$ then for
$B^{''}=\{\frac{1}{\lambda\beta}e_{1}; \ \frac{1}{\sqrt{\mu\lambda}\beta}e_{2}; \ \frac{1}{\sqrt{\lambda}\beta}e_{3}\} $ we have
$$ M_{B^{''}}=\left(\begin{array}{cccccc}
  1 & 0 &  0\\
  1 & 0 & 0\\
  1 & 0 &  0\end{array}\right). $$
In this case  $E_{M_{B}}$ is isomorphic to $E_{7}$.

Similarly, if $\lambda<0, \ \mu<0$ and $\lambda>0, \ \mu<0$ then the structure matrix will be
$$ M_{B^{''}}=\left(\begin{array}{cccccc}
  \ 1 & 0 &  0\\
  \ 1 & 0 & 0\\
  -1 & 0 &  0\end{array}\right). $$
In this case  $E_{M_{B}}$ is isomorphic to $E_{8}$.

If $\lambda<0, \ \mu>0$ then  the structure matrix will be
$$ M_{B^{''}}=\left(\begin{array}{cccccc}
  \ 1 & 0 &  0\\
  -1 & 0 & 0\\
  -1 & 0 &  0\end{array}\right). $$
In this case  $E_{M_{B}}$ is isomorphic to $E_{9}$.

\textbf{Case 2.1.4.} Assume $\mu\neq 0$, $\gamma\neq0$. Now, consider the change of basis  $B^{''}=\{\frac{1}{\lambda\beta}e_{1}; \ e_{2}; \ e_{3}\} $.
Then
$$ M_{B^{''}}=\left(\begin{array}{cccccc}
  1 & \frac{\gamma}{\lambda\beta^{2}} &  0\\
  \mu\lambda\beta^{2} & \mu\gamma & 0\\
  \lambda\beta^{2} & \gamma &  0\end{array}\right). $$

\textbf{Case 2.1.4.1.} Suppose that $\lambda\beta^{2}+\mu\gamma^{2}=0 $. And so $\mu=-\frac{\lambda\beta^{2}}{\gamma^{2}}$. It follows that
$$ M_{B^{''}}=\left(\begin{array}{cccccc}
  1 & \frac{\gamma}{\lambda\beta^{2}} &  0\\
  -\frac{\lambda^{2}\beta^{4}}{\gamma^{2}} & -\frac{\lambda\beta^{2}}{\gamma} & 0\\
  \lambda\beta^{2} & \gamma &  0\end{array}\right). $$

Let's $\lambda>0$, if we take $B^{'''}=\{ e_{1}; \   \frac{\gamma}{\lambda\beta^{2}}e_{2}, \ \frac{1}{\sqrt{\lambda}\beta}\}$ then
$ M_{B^{'''}}=\left(\begin{array}{cccccc}
  \ 1 & 1 &  0\\
  -1 & -1 & 0\\
  \ 1 & 1 &  0\end{array}\right). $\\
In this case  $E_{M_{B}}$ is isomorphic to $E_{2}$.

Let's $\lambda<0$, in this case  we may take $B^{'''}=\{ e_{1}; \   \frac{\gamma}{\lambda\beta^{2}}e_{2}, \ \frac{1}{\sqrt{\lambda}\beta}\}$,  then we have \\
$ M_{B^{'''}}=\left(\begin{array}{cccccc}
  \ 1 & 1 &  0\\
  -1 & -1 & 0\\
  -1 & -1 &  0\end{array}\right). $
In this case  $E_{M_{B}}$ is isomorphic to $E_{3}$.

\textbf{Case 2.1.4.2.} Suppose that $\lambda\beta^{2}+\mu\gamma^{2}\neq 0 $ and
 $
 \left\{\begin{array}{ll}
\lambda\mu>0\\
 \lambda\beta^{2}+\mu\gamma^{2}> 0
\end{array} \right.
 $.
 If we take $B^{'''}$ is the change of basis such that
 $$ P_{B^{'''}B^{''}}=\left(\begin{array}{cccccc}
 1 & \frac{\gamma}{\lambda\beta^{2}} & 0\\
 -\mu\gamma & 1 & 0 \\
 0 & 0 & 1 \end{array}\right) $$ then we obtain that
 $$ M_{B^{'''}}=\left(\begin{array}{cccccc}
  \frac{\lambda\beta^{2}+\mu\gamma^{2}}{\lambda\beta^{2}} & 0 &  0\\
  \mu(\lambda\beta^{2}+\mu\gamma^{2}) & 0 & 0\\
  \lambda\beta^{2} & 0 &  0\end{array}\right). $$
 Now, consider the change of basis $B^{*}=\{e_{1}^{*}, \ e_{2}^{*}, \ e_{3}^{*}  \}$
such that
$$ P_{B^{*}B^{'''}}=\left(\begin{array}{cccccc}
 \frac{\lambda\beta^{2}}{\lambda\beta^{2}+\mu\gamma^{2}} & 0 & 0\\
 0 & \sqrt{\frac{\lambda}{\mu}}\frac{\beta}{\lambda\beta^{2}+\mu\gamma^{2}} & 0 \\
 0 & 0 & \frac{1}{\sqrt{\lambda\beta^{2}+\mu\gamma^{2}}} \end{array}\right) $$
and the structure matrix is
$ M_{B^{*}}=\left(\begin{array}{cccccc}
   1 & 0 &  0\\
  1 & 0 & 0\\
  1 & 0 &  0\end{array}\right). $
In this case  $E_{M_{B}}$ is isomorphic to $E_{7}$.

Similarly, it is not difficult to show that if
 $
 \left\{\begin{array}{ll}
\lambda\mu<0\\
 \lambda\beta^{2}+\mu\gamma^{2}> 0
\end{array} \right.
 $
and $
 \left\{\begin{array}{ll}
\lambda\mu>0\\
 \lambda\beta^{2}+\mu\gamma^{2}< 0
\end{array} \right.
 $
 then the structure matrix is
 $ M_{B^{*}}=\left(\begin{array}{cccccc}
   1 & 0 &  0\\
  1 & 0 & 0\\
  -1 & 0 &  0\end{array}\right). $
In this case  $E_{M_{B}}$ is isomorphic to $E_{8}$.

If $
 \left\{\begin{array}{ll}
\lambda\mu<0\\
 \lambda\beta^{2}+\mu\gamma^{2}< 0
\end{array} \right.
 $
then the structure matrix is
$ M_{B^{*}}=\left(\begin{array}{cccccc}
   1 & 0 &  0\\
  -1 & 0 & 0\\
  -1 & 0 &  0\end{array}\right). $
In this case  $E_{M_{B}}$ is isomorphic to $E_{9}$.

\textbf{Case 2.2.} Now, we consider the case $\lambda=0$.

\textbf{Case 2.2.1.} Assume that $\mu\gamma\neq 0$. Consider the change of basis $B'\{ e_{3}, \ e_{2}, \ e_{1} \}$ and we obtain
$$ M_{B'}=\left(\begin{array}{cccccc}
  \mu\gamma & \mu\beta &  0\\
  0 & 0 & 0\\
  \gamma & \beta &  0\end{array}\right). $$
If we take $B^{''}=\{ \mu\gamma e_{1}+\mu\beta e_{2}; \ e_{2}, \ \mu e_{3}\}$  then
$$ M_{B^{*}}=\left(\begin{array}{cccccc}
   \mu^{2}\gamma^{2} & 0 &  0\\
  0 & 0 & 0\\
  \mu & 0 &  0\end{array}\right). $$
Let $\mu>0$, consider the change of basis $B^{'''}=\{ \frac{1}{\mu^{2}\gamma^{2}}e_{1};\ e_{2};\ \frac{1}{\sqrt{\mu}\mu\gamma}e_{3} \}$. Then\\
$ M_{B^{'''}}=\left(\begin{array}{cccccc}
   1 & 0 &  0\\
  0 & 0 & 0\\
  1 & 0 &  0\end{array}\right). $
In this case  $E_{M_{B}}$ is isomorphic to $E_{5}$.

Similarly, if $\mu<0$ then we have
 $ M_{B^{'''}}=\left(\begin{array}{cccccc}
   1 & 0 &  0\\
  0 & 0 & 0\\
  -1 & 0 &  0\end{array}\right). $
In this case  $E_{M_{B}}$ is isomorphic to $E_{6}$.

\textbf{Case 2.2.2.} Suppose that $\mu\gamma= 0$.

\textbf{Case 2.2.2.1.} Assume that $\mu= 0$ and $\gamma$ is an arbitrary real number. Then the structure matrix is
$ M_{B}=\left(\begin{array}{cccccc}
   0 & \beta &  \gamma\\
  0 & 0 & 0\\
  0 & 0 &  0\end{array}\right). $
 Let's  take the change of basis $B'=\{ \beta e_{2}+\gamma e_{3}; \ \frac{1}{\beta}e_{3}; \ e_{1} \}$ then
  $ M_{B'}=\left(\begin{array}{cccccc}
   0 & 0 &  0\\
  0 & 0 & 0\\
  1 & 0 &  0\end{array}\right). $
In this case  $E_{M_{B}}$ is isomorphic to $E_{10}$.

  \textbf{Case 2.2.2.2.} Assume that $\mu> 0$. It follows that $\gamma =0$. For $B'=\{ \beta e_{2};\ e_{1}; \ \frac{1}{\sqrt{\mu}}e_{3} \}$\\
  we have  $ M_{B'}=\left(\begin{array}{cccccc}
   0 & 0 &  0\\
  1 & 0 & 0\\
  1 & 0 &  0\end{array}\right). $\\
It means that in this case  $E_{M_{B}}$ is isomorphic to $E_{11}$.

  \textbf{Case 2.2.2.3.} Assume that $\mu< 0$. It follows that $\gamma =0$. For $B'=\{ \beta e_{2};\ e_{1}; \ \frac{1}{\sqrt{-\mu}}e_{3} \}$
  we have  $ M_{B'}=\left(\begin{array}{cccccc}
   0 & 0 &  0\\
  1 & 0 & 0\\
  -1 & 0 &  0\end{array}\right). $\\
It means that in this case  $E_{M_{B}}$ is isomorphic to $E_{12}$.
\end{proof}
\

Using Lemma 3 we get the following theorem.

\begin{thm}
For given values $(s,t)\in \{ (s,t):\eta(s)+\varphi_{1}(s)\vartheta(s)+\varphi_{2}(s)\kappa(s)\neq0, $ $ \eta(t)\neq 0, $
 $ \eta^{2}(t)+\varphi_{1}(s)\vartheta^{2}(t)+\varphi_{2}(s)\kappa^{2}(t)=0 \}$ of time,
 $E_{3}^{[s,t]}$ is isomorphic to
  \

\begin {itemize}
 \item[(a)] $E_{1}$ if one of the following conditions is hold:
 \item[1)] $ \varphi_{2}(s)=0, \    \varphi_{1}(s) \vartheta(t) \kappa(t)\neq 0  $;
 \item[2)] $   \varphi_{1}(s)=0, \  \varphi_{2}(s) \vartheta(t) \kappa(t)\neq 0  $;
 \item[3)] $   \kappa(t) = \varphi_{2}(s)=0, \    \varphi_{1}(s) \vartheta(t)\neq 0$;
 \item[4)] $   \varphi_{1}(s)= \vartheta(t) =0, \    \kappa(t)\varphi_{2}(s)\neq 0$;\\

 \item[(b)] $E_{2}$ if one of  the following conditions is hold:
  \item[1)] $ \kappa(t)=0, \   \varphi_{1}(s) \varphi_{2}(s) \vartheta(t)\neq 0, \  \varphi_{2}(s)>0  $;
 \item[2)] $  \vartheta(t)=0, \    \varphi_{1}(s) \varphi_{2}(s)\kappa(t)\neq 0, \   \varphi_{1}(s)>0  $;
 \item[3)] $   \varphi_{1}(s) \varphi_{2}(s) \vartheta(t) \kappa(t)\neq 0   $;\\

 \item[(c)] $E_{3}$ if one of  the following conditions is hold:
   \item[1)] $ \kappa(t)=0, \   \varphi_{1}(s) \varphi_{2}(s) \vartheta(t)\neq 0, \  \varphi_{2}(s)<0  $;
 \item[2)] $  \vartheta(t)=0, \    \varphi_{1}(s) \varphi_{2}(s)\kappa(t)\neq 0, \   \varphi_{1}(s)<0  $;\\

  \end {itemize}

For given values $(s,t)\in \{ (s,t):\eta(s)+\varphi_{1}(s)\vartheta(s)+\varphi_{2}(s)\kappa(s)\neq0, $ $ \eta(t)\neq 0, $
$  \eta^{2}(t)+\varphi_{1}(s)\vartheta^{2}(t)+\varphi_{2}(s)\kappa^{2}(t)\neq 0 \}$ of time,
 $E_{3}^{[s,t]}$ is isomorphic to

  \

\begin {itemize}
 \item[(d)] $E_{4}$ if  the following condition is hold:
 \item[] $   \varphi_{1}(s)=0, \  \varphi_{2}(s)=0 $;\\

 \item[(e)] $E_{5}$ if one of  the following conditions is hold:
  \item[1)] $ \varphi_{1}(s)=0, \     \varphi_{2}(s)>0  $;
 \item[2)] $    \varphi_{2}(s)=0, \  \varphi_{1}(s)>0 $;\\

 \item[(f)] $E_{6}$ if one of  the following conditions is hold:
  \item[1)] $ \varphi_{1}(s)=0, \ \varphi_{2}(s)<0   $;
 \item[2)] $ \varphi_{2}(s)=0, \ \varphi_{1}(s)<0 $;\\

 \item[(g)] $E_{7}$ if  the following condition is hold:
 \item[] $ \varphi_{1}(s)>0, \ \varphi_{2}(s)>0 $;\\

 \item[(h)] $E_{8}$ if one of  the following conditions is hold:
 \item[1)] $ \varphi_{1}(s)>0, \ \varphi_{2}(s)<0, \   \eta^{2}(t)+\varphi_{1}(s)\vartheta^{2}(t)+\varphi_{2}(s)\kappa^{2}(t)>0 $;
 \item[2)] $ \varphi_{1}(s)<0, \ \varphi_{2}(s) >0, \  \eta^{2}(t)+\varphi_{1}(s)\vartheta^{2}(t)<0, \   \eta^{2}(t)+\varphi_{1}(s)\vartheta^{2}(t)+\varphi_{2}(s)\kappa^{2}(t)>0 $;
 \item[3)] $ \varphi_{1}(s)<0, \ \varphi_{2}(s) >0, \  \eta^{2}(t)+\varphi_{1}(s)\vartheta^{2}(t)>0 $;
 \item[4)] $ \varphi_{1}(s)<0, \ \varphi_{2}(s) <0, \  \eta^{2}(t)+\varphi_{1}(s)\vartheta^{2}(t)>0, \   \eta^{2}(t)+\varphi_{1}(s)\vartheta^{2}(t)+\varphi_{2}(s)\kappa^{2}(t)<0 $;
 \item[5)] $ \varphi_{1}(s)<0, \ \varphi_{2}(s) <0, \  \eta^{2}(t)+\varphi_{1}(s)\vartheta^{2}(t)<0 $;
 \item[6)] $ \varphi_{1}(s)\neq 0, \ \varphi_{2}(s) \neq 0, \  \eta^{2}(t)+\varphi_{1}(s)\vartheta^{2}(t)=0 $;\\

 \item[(i)] $E_{9}$ if one of  the following conditions is hold:
 \item[1)] $ \varphi_{1}(s)>0, \ \varphi_{2}(s)<0, \  \eta^{2}(t)+\varphi_{1}(s)\vartheta^{2}(t)+\varphi_{2}(s)\kappa^{2}(t)<0 $;
 \item[2)] $ \varphi_{1}(s)<0, \ \varphi_{2}(s)>0, \  \eta^{2}(t)+\varphi_{1}(s)\vartheta^{2}(t)<0, \  \eta^{2}(t)+\varphi_{1}(s)\vartheta^{2}(t)+\varphi_{2}(s)\kappa^{2}(t)<0 $;
 \item[3)] $ \varphi_{1}(s)<0, \ \varphi_{2}(s)<0, \  \eta^{2}(t)+\varphi_{1}(s)\vartheta^{2}(t)>0, \  \eta^{2}(t)+\varphi_{1}(s)\vartheta^{2}(t)+\varphi_{2}(s)\kappa^{2}(t)>0 $.\\

  \end {itemize}

For given values $(s,t)\in \{ (s,t):\eta(s)+\varphi_{1}(s)\vartheta(s)+\varphi_{2}(s)\kappa(s)\neq0, \ \eta(t)=0 , \  \vartheta(t)\neq 0 \}$ of time ,
 $E_{3}^{[s,t]}$ is isomorphic to
  \

\begin {itemize}
\item[(j)] $E_{2}$ if  the following condition is hold:
 \item[] $ \varphi_{2}(s) \neq 0, \ \kappa(t)\neq 0, \ \varphi_{1}(s)>0, \  \varphi_{1}(s)\vartheta^{2}(t)+\varphi_{2}(s)\kappa^{2}(t)=0 $;\\

\item[(k)] $E_{3}$ if  the following condition is hold:
 \item[] $ \varphi_{2}(s)\neq 0, \ \kappa(t)\neq 0, \ \varphi_{1}(s)<0, \  \varphi_{1}(s)\vartheta^{2}(t)+\varphi_{2}(s)\kappa^{2}(t)=0 $;\\

 \item[(l)] $E_{5}$ if one of the following conditions is hold:
 \item[1)] $ \varphi_{2}(s)=0, \  \varphi_{1}(s) >0 $;
 \item[2)] $  \varphi_{1}(s) =0, \kappa(t)\neq 0, \ \varphi_{2}(s)>0 $;\\

 \item[(m)] $E_{6}$ if one of the following conditions is hold:
 \item[1)] $ \varphi_{2}(s)=0, \  \varphi_{1}(s) <0 $;
 \item[2)] $ \varphi_{1}(s) =0, \ \kappa(t)\neq 0, \ \varphi_{2}(s)<0 $;\\

\item[(n)] $E_{7}$ if  the following condition is hold:
 \item[ ] $  \varphi_{1}(s) >0, \ \varphi_{2}(s)>0 $;\\

\item[(o)] $E_{8}$ if one of the following conditions is hold:
 \item[1)] $  \varphi_{1}(s)<0, \ \varphi_{2}(s)<0 $;
 \item[2)] $ \varphi_{1}(s)\vartheta^{2}(t)+\varphi_{2}(s)\kappa^{2}(t)> 0, \  \varphi_{1}(s)\varphi_{2}(s)<0 $;\\

\item[(p)] $E_{9}$ if  the following condition is hold:
 \item[ ] $ \varphi_{1}(s)\vartheta^{2}(t)+\varphi_{2}(s)\kappa^{2}(t)< 0, \  \varphi_{1}(s)\varphi_{2}(s)<0 $;\\

 \item[(q)] $E_{10}$ if  the following condition is hold:
 \item[] $ \varphi_{1}(s)=0, \ \varphi_{2}(s)=0 $;\\

\item[(r)] $E_{11}$ if  the following condition is hold:
 \item[] $ \varphi_{1}(s)=0, \ \kappa(t) =0, \ \varphi_{2}(s)>0 $;\\

\item[(s)] $E_{12}$ if  the following condition is hold:
 \item[] $ \varphi_{1}(s)=0, \ \kappa(t) =0, \ \varphi_{2}(s)<0 $;\\

 \end {itemize}
\end{thm}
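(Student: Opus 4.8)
The plan is to observe that the structural constant matrix $\mathcal{M}_3^{[s,t]}$ is already an instance of the matrix analyzed in Lemma 3, so that the classification is obtained by a single substitution rather than by repeating the basis-change computations. First I would write out $\mathcal{M}_3^{[s,t]}$ and note that its second and third rows are the first row multiplied by $\varphi_1(s)$ and $\varphi_2(s)$ respectively. Hence, setting
$$\alpha=\theta(s)\eta(t),\qquad \beta=\theta(s)\vartheta(t),\qquad \gamma=\theta(s)\kappa(t),\qquad \lambda=\varphi_1(s),\qquad \mu=\varphi_2(s),$$
the matrix $\mathcal{M}_3^{[s,t]}$ becomes exactly $\left(\begin{smallmatrix}\alpha&\beta&\gamma\\ \lambda\alpha&\lambda\beta&\lambda\gamma\\ \mu\alpha&\mu\beta&\mu\gamma\end{smallmatrix}\right)$, and therefore the isomorphism type of $E_3^{[s,t]}$ is read off directly from Lemma 3 evaluated at these parameters.

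The second step is to build the dictionary that converts each hypothesis of Lemma 3 into a condition on $\eta,\vartheta,\kappa,\varphi_1,\varphi_2$. The crucial fact here is that $\theta(s)\neq 0$, so $\theta^2(s)>0$. This immediately gives $\alpha=0\Leftrightarrow\eta(t)=0$, $\beta=0\Leftrightarrow\vartheta(t)=0$, $\gamma=0\Leftrightarrow\kappa(t)=0$, while $\lambda=\varphi_1(s)$ and $\mu=\varphi_2(s)$ carry their sign and vanishing conditions verbatim. Moreover the quadratic forms factor the positive scalar out,
$$\alpha^2+\lambda\beta^2+\mu\gamma^2=\theta^2(s)\bigl(\eta^2(t)+\varphi_1(s)\vartheta^2(t)+\varphi_2(s)\kappa^2(t)\bigr),\qquad \alpha^2+\lambda\beta^2=\theta^2(s)\bigl(\eta^2(t)+\varphi_1(s)\vartheta^2(t)\bigr),$$
so the sign and the vanishing of each such expression coincide with those of its bracketed part. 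In particular the three top regimes of the theorem ($\eta(t)\neq0$ with quadratic $=0$; $\eta(t)\neq0$ with quadratic $\neq0$; $\eta(t)=0,\ \vartheta(t)\neq0$) correspond precisely to the regimes I, II, III of Lemma 3.

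With this dictionary, every subcase (a)--(s) of the theorem is obtained by substituting into the identically labelled subcase of Lemma 3. For example Lemma 3, I(a)(1), namely $\mu=0$, $\lambda\beta\gamma\neq0$, becomes $\varphi_2(s)=0$, $\varphi_1(s)\vartheta(t)\kappa(t)\neq0$ after discarding the nonzero factor $\theta^2(s)$; and Lemma 3, II(h)(2) becomes part (h)(2) here once $\alpha^2+\lambda\beta^2$ and $\alpha^2+\lambda\beta^2+\mu\gamma^2$ are replaced by their bracketed counterparts. The remaining subcases are handled in exactly the same mechanical way.

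The hard part will be purely organizational: matching each of the roughly two dozen labelled subcases correctly and verifying that no sign is lost in the factorization. Since $\theta^2(s)>0$, no sign condition can flip, so there is no genuine analytic difficulty; the one point demanding care is that compound nonvanishing hypotheses such as $\lambda\beta\gamma\neq0$ silently encode $\vartheta(t)\neq0$ and $\kappa(t)\neq0$ as well, and I would check that these are faithfully reproduced so that no subcase is merged, dropped, or double-counted.
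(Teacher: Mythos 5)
Your proposal is correct and is exactly the argument the paper intends: the paper's proof of this theorem is the single line ``The proof follows from Lemma 3,'' and your substitution $\alpha=\theta(s)\eta(t)$, $\beta=\theta(s)\vartheta(t)$, $\gamma=\theta(s)\kappa(t)$, $\lambda=\varphi_1(s)$, $\mu=\varphi_2(s)$, together with the observation that the positive factor $\theta^2(s)$ preserves the sign and vanishing of every relevant expression, is precisely the translation that makes that line work. You have in fact spelled out more detail than the paper does.
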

\begin{proof}
The proof follows from Lemma 3.
\end{proof}
\

To illustrate the essence of Theorem 3, consider the following example.
\begin{ex}
Let
 $ \eta(t)=\left\{\begin{array}{ll}  t+1, \ 0\leq t < 6 \\ [1mm]
 0,\ \ \ \ \ \  t\geq 6  \end{array} \right. $,
 $ \vartheta(t)=\left\{\begin{array}{ll} \frac{1}{\sqrt{2}}(t+1), \ 0\leq t<2\\ [2mm]
 \sqrt{(t-1)^{2}+4}, \ 2\leq t <3 \\ [1mm]
 0, \ \ \ \ \ \ 3\leq t < 6 \\ [1mm]
  t-2,\ \   t\geq 6   \end{array} \right. $,

  $ \kappa(t)=\left\{\begin{array}{ll}  0, \ \ \ \ \ \ 0\leq t < 1 \\ [1mm]
 t-3,\ 1\leq t<6 \\ [1mm]
 0, \ \ \ \ \ \  t\geq 6  \end{array} \right. $,
 $ \varphi_{1}(s)=\left\{\begin{array}{ll}  -2, \ \ \  \ 0\leq s < 3 \\ [1mm]
 0,\ \ \ \ \ \  3\leq s \leq 5 \\ [1mm]
 s-5,7 \ ,  \  s>5  \end{array} \right. $,

  $ \varphi_{2}(s)=\left\{\begin{array}{ll}  -1, \ \ \  \ 0\leq s < 1 \\ [1mm]
 0,\ \ \ \ \ \  1\leq s <2 \\ [1mm]
 1`, \ \ \ \ \ \ 2\leq s <3 \\[1mm]
 s-4  ,  \  s \geq 3  \end{array} \right. $.

 \begin{figure}[h!]
\includegraphics[width=0.7\textwidth]{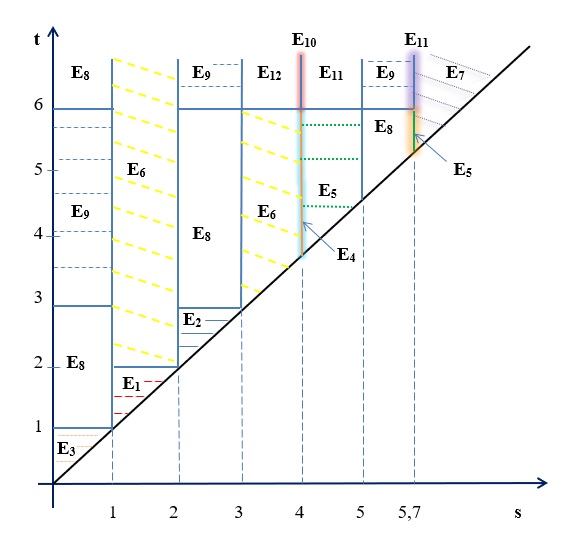}\\
\caption{The partition corresponding to the classification of CEA $E_{3}^{[s,t]}$}\label{Fig.3}
\end{figure}

  We consider the next cases (see Fig. \ref{Fig.3}):

 \textbf{Case 1.} If $ 0\leq s <1 $ then we have
\begin {itemize}
 \item[(a)]   $E_{3}$ when $ 0\leq t <1 $;
 \item[(b)]   $E_{8}$ when $ 1\leq t <3 $;
 \item[(c)]   $E_{9}$ when $ 3\leq t <6 $;
 \item[(d)]   $E_{8}$ when $ t \geq 6 $;
 \end {itemize}

 \textbf{Case 2.} If $ 1\leq s <2 $ then we have
 \begin {itemize}
 \item[(a)]   $E_{1}$ when $ 1\leq t <2 $;
 \item[(b)]   $E_{6}$ when $ t \geq 2 $;
\end {itemize}

\textbf{Case 3.} If $ 2\leq s <3 $ then we have
\begin {itemize}
 \item[(a)]   $E_{2}$ when $ 2\leq t <3 $;
 \item[(b)]   $E_{8}$ when $ 3\leq t <6 $;
 \item[(c)]   $E_{9}$ when $ t \geq 6 $;
 \end {itemize}

\textbf{Case 4.} If $ 3\leq s <4 $ then we have
\begin {itemize}
 \item[(a)]   $E_{6}$ when $ 3\leq t <6 $;
 \item[(b)]   $E_{12}$ when $ t \geq 6 $;
 \end {itemize}

\textbf{Case 5.} If $ s=4 $ then we have
\begin {itemize}
 \item[(a)]   $E_{4}$ when $ 4\leq t <6 $;
 \item[(b)]   $E_{10}$ when $ t \geq 6 $;
 \end {itemize}

 \textbf{Case 6.} If $ 4< s \leq 5 $ then we have
\begin {itemize}
 \item[(a)]   $E_{5}$ when $ 4< t <6 $;
 \item[(b)]   $E_{11}$ when $ t \geq 6 $;
 \end {itemize}

 \textbf{Case 7.} If $ 5< s <5,7 $ then we have
\begin {itemize}
 \item[(a)]   $E_{8}$ when $ 5< t <6 $;
 \item[(b)]   $E_{9}$ when $ t \geq 6 $;
 \end {itemize}

 \textbf{Case 8.} If $ s=5,7 $ then we have
\begin {itemize}
 \item[(a)]   $E_{5}$ when $ 5,7\leq t <6 $;
 \item[(b)]   $E_{11}$ when $ t \geq 6 $;
 \end {itemize}

 \textbf{Case 9.} If $  s>5,7 $ then we have   $E_{7}$ when $ t>5,7 $.

\end{ex}

\end{document}